\newtheorem{Property}{Property}
\newcommand{\beq}{\begin{equation}}
\newcommand{\eeq}{\end{equation}}
\newtheorem{thm}{Theorem}
\newtheorem{re}{{\bf Remark}}
\newtheorem{ex}{{\bf Example}}
\newtheorem{cor}{{\bf Corollary}}
\newtheorem{defi}{{\bf Definition}}
\newtheorem{prop}{{\bf Proposition}}
\newtheorem{lemme}{{\bf Lemma}}
\newtheorem{prob}{{\bf Problem}}
\newtheorem{ass}{{\bf Assumption}}
\newtheorem{notat}{{\bf Notation}}
\newtheorem{solut}{{\bf Solution}}
\newenvironment{theorem}{\begin{thm} \rm}{\end{thm}}
\newenvironment{proposition}{\begin{prop} \rm}{\end{prop}}
\newenvironment{assumption}{\begin{ass} \rm}{\end{ass}}
\newenvironment{solution}{\begin{solut} \rm}{\end{solut}}
\newenvironment{proof}{\paragraph{Proof:}}{\endpf\\}
\def\endpf{\hfill$\Box$\medskip}
\def \RR {I \! \! R}
\def \ee {\begin{equation}}
\def \eee {\end{equation}}
\def \eqe {\begin{eqnarray}}
\def \eqee {\end{eqnarray}}
  \thanks[BIOCORE]{BIOCORE, INRIA Sophia
   Antipolis, BP 93, 06902 Sophia Antipolis Cedex, France. {\tt\small
           \{frederic.grognard,\,olivier.bernard\}@sophia.inria.fr} }%
\thanks{Department of Biology, McMaster University,
1280 Main St W, Hamilton ON, L8S 4K1, Canada. {\tt\small
           akhmetzhanov@googlemail.com}}%
\begin{document}
\makeRR   % cas d'un rapport de recherche
%% \makeRT % cas d'un rapport technique.
%% a partir d'ici, chacun fait comme il le souhaite
\section{Introduction}

Microalgae  have recently received more and more attention in the frameworks
of CO$_2$ fixation and renewable energy \cite{Huntley2007, Chisti2007}. Their
high actual photosynthetic yield compared to terrestrial plants (whose growth
is limited by CO$_2$ availability) leads to large potential algal biomass
productions  of several tens of tons per hectare and per
year \cite{Chisti2007}. Also, they have a potential for numerous high added value
commercial applications \cite{Spolaore200687} as well as wastewater treatment
capabilities including nutrients and heavy metal removal
\cite{Hoffmann1998,Oswald1988}. 
We focus on the industrial production of microalgae in the so called photobioreactors. In this work, photobioreactor is meant in the large, including  many possibles types of culturing devices, including the most simple open raceways systems.

Concentrating on the bioenergy applications of
microalgae, one central feature is the energy balance of the process; the only
conceivable light source ({\it i.e} the primary energy source) is natural
sunlight, which varies during the day. This variation might have an important
impact on the microalgae productivity; it is therefore necessary to take it
into account in order to design control laws that will be applied to the
bioreactor.  

The objective of this paper is to develop an optimal control law that would
maximize the yield of a photobioreactor operating in continuous mode, while
taking into account that the light source  that will be used is the natural
light. The light source is therefore periodic with a light phase (day) and a
dark phase (night). In addition to this time-varying periodic light source, we
will take the auto-shading in the photobioreactor into account: the pigment
concentration (mainly chlorophyll) affects the light distribution and thus the
biological activity within the reactor. As a consequence, for too high a
biomass, light in the photobioreactor is strongly attenuated and per-capita
growth is low. 

Optimal control of bioreactors has been studied for many years whether it was
for metabolites production \cite{Tartakovsky1995}, ethanol fermentation
\cite{Wang1997}, baker yeast production \cite{Wu1985} or, more generally,
optimal control of fed-batch processes taking kinetics uncertainties into
account \cite{Smets2004}. The control of photobioreactors is however a lot
more scarce in the literature, though the influence of self-shading on the
optimal setpoint \cite{Masci2010} or on an MPC control algorithm
\cite{Becerra2008} for productivity optimization has already been studied. The
light-variation was mostly absent \cite{Masci2010,Becerra2008} or considered
to be an input that could be manipulated in order to impose the physiological
state of the microalgae \cite{Marxen2005} or maximize productivity as one of
the parameters of bioreactor design \cite{Suh2003}. The present problem has
however not been tackled yet in the literature. 

We therefore developed a model that takes both the light variation and the
self-shading features into account in order to develop the control law, where
the substrate concentration in the input (marginally) and the dilution rate
(mainly) will be used. This model should not be too complicated in order to be
tractable and should present the main features of the process. Since we want
to develop a control strategy that will be used on the long run, we could
choose an infinite time-horizon measure of the yield. However, we rather take
advantage of the observation that, in the absence of a discount rate in the
cost functional, the control should be identical everyday and force the state
of the system to be identical at the beginning of the day and 24 hours
later. We therefore opted for optimizing a cost over one day with the
constraint that the initial and terminal states should be identical. 

The paper is structured as follows: in Section~\ref{sec:model} we develop a
photobioreactor model presenting all the aforementioned features; in Section
\ref{sec:constant} we identify the optimal operating mode in constant light
environment; in Section \ref{sec:varying} we develop our main result, that is
the form of the optimal control law in a day \& night setting; in Section
\ref{sec:large} we identify the consequence if the control constraint is
generous; we conclude by a simulation study and bifurcation analysis in
Section \ref{sec:simulations}. 

\section{A photobioreactor model with light attenuation}\label{sec:model}

Micro-algae growth in a photobioreactor is often modeled through one of two
models, the Monod model \cite{Monod1942} or the Droop Model
\cite{Droop1968}. The latter is more accurate as it separates the process of
substrate uptake and growth of the microalgae. However, the former already
gives a reasonable representation of reality by coupling growth and uptake,
and is more convenient for building control laws since it is simpler. The
Monod model writes: 
\begin{equation}\label{monod}
\left\{
\begin{array}{lll}
  \dot S& =& D(S_{in}-S)-k\mu(S,I)X\;,\\
  \dot X& =& \mu(S,I)X-DX\;,
\end{array}
\right.
\end{equation}
where $S$ and $X$ are the substrate (e.g. nitrate concentration) and biomass  (measured in carbon concentration, gC.L$^{-1}$) in the medium,
which is supposed to be homogeneous through constant stirring, while $D$ is the dilution rate, $S_{in}$ is the substrate input concentration and $k$ is the substrate/biomass yield coefficient; $\mu(S,I)$ is the microalgae biomass growth rate, depending on substrate and light intensity $I$, which is often taken to be of the Michaelis-Menten form with respect to $S$ and $I$: 
\begin{equation}\label{michaelis-menten}
\mu(S)=\frac{\bar\mu S}{S+K_S}\frac{ I}{I+K_I} \;.
\end{equation}
With $K_S$ and $K_I$ the associated half saturation constants.
We will however not focus on this specific form and simply consider that $\mu(S,I)=\mu_S(S)\mu_I(I)$ is the product of a light related function $\mu_I(I)$ and of a substrate related function $\mu_S(S)$, such that $\mu_S(0)=0$, is increasing and bounded  (with $\lim_{S\rightarrow\infty}\mu_S(S)=\bar \mu $). Generalization for the function $\mu_I(I)$ will also be proposed later on. 

We will now provide extensions to this model so that it fits better to the production problem in high-density photobioreactors under study. 

\subsection{Adding respiration}

The central role played by respiration in the computation and optimization of
the productivity of a photobioreactor has been known for a long time (see
e.g. \cite{GroSoe85}), since it tends to reduce the biomass. We therefore
introduce respiration by the microalgae into our model. In contrast to
photosynthesis, this phenomenon takes place with or without light: from a
carbon point of view, it converts biomass into carbon dioxyde, so that we
represent it as a $-r X$ term that represents loss of biomass. The biomass
dynamics then become 
\[
 \dot{X} = \mu(S) X -rX -D X\;. 
\]
Remark that mortality is also included in the respiration term. Mortality can be high in photobioreactors, due to its high biomass density.

\subsection{Adding light attenuation}

When studying high concentration reactors, light
cannot be considered to have the same intensity in any point of the reactor. We consider an horizontal planar
photobioreactor (or raceway) with constant horizontal section $A$ over the
height of the reactor and vertical incoming light. We will then represent
light attenuation following an exponential Beer-Lambert law \cite{Lambert1760}
where the attenuation at some depth $z$ comes from the total biomass $Xz$ per
surface unit contained in the layer of depth $[0,z]$: 
\begin{equation} \label{eq:Iz}
I(Xz) = I_0\mathrm e^{-aXz}\;,
\end{equation}
where $I_0$ is the incident light and $a$ is a light attenuation
coefficient. In microalgae, chlorophyll is mostly the cause of this shadow
effect and, in model (\ref{monod}), it is best represented by a fixed portion
of the biomass \cite{Bernard2009}, which yields the direct dependence in $X$
in model (\ref{eq:Iz}).
%Finally, the light source variation will be introduced by taking a time-varying incident light $I_0(\tau)$. 
With such an hypothesis on the light intensity that reaches depth $z$, growth
rates vary with depth: in the upper part of the reactor, higher light causes
higher growth than in the bottom part. Supposing that light attenuation
directly affects the growth rate \cite{Huismann2002}, the growth rate
in the form (\ref{michaelis-menten}) for a given depth $z$ can then be written
as 
\[
\displaystyle \mu_z(S,I(Xz)) = \frac{ I(Xz)}{I(Xz)+K_I}  \mu_S(S)\;.
\]
We see that this results in a differentiated biomass growth-rate in the
reactor which could possibly yield spatial heterogeneity in the biomass
concentration; however, constant stirring is supposed to keep  the
concentrations of $S$ and $X$ homogeneous.  Then, we can compute the mean growth rate in the reactor:
\[
\mu(S,I_0,X) = \frac{1}{L} \int_0^L \mu_z(S,I(Xz))\mathrm dz\;,
\]
where $L$ is the depth of the reactor. It is this average growth rate that will be used
in the lumped model that we develop.   
We then have:
$$
\begin{array}{lll}
\mu(S,I_0,X)&=& \displaystyle\frac{1}{L}\int_0^L\frac{I_0\mathrm e^{-aXz}}{I_0\mathrm e^{-a Xz}+K_I}\>\mathrm dz  \mu_S(S) \\
&=& \displaystyle\frac{1}{aXL}\ln\left(\!\frac{I_0+K_I}{I_0\mathrm e^{-aXL}+K_I}\!\right)\!\mu_S(S)\;.%\\
%&=&\displaystyle\frac{\hat\nu}{\tilde a m}\ln\frac{I_0(\tau)+K_I}{I_0(\tau)e^{-\tilde a m}+K_I}\frac{s}{s+K_s}
\end{array}
$$
%where $m=xV$ is the total biomass in the bioreactor (with V the reactor
%volume) and  $\tilde a=aL/V$ is a modified attenuation parameter.

\subsection{Considering varying light}

In order to determine more precisely the model, we should now indicate what the varying light is like. Classically, it is considered that daylight % at equinoxe 
varies as the square of a sinusoidal function so that
\[
I_0(t)=\left(\max\left\{\sin\left(\frac{2\pi t}{{T}}\right),0\right\}\right)^2\;,
\]
where $T$ is the length of the day. The introduction of such a varying light would however render the computations analytically untractable. Therefore, we approximate the light source by a step function:
$$
I_0(t)=\left\{\begin{array}{lllll}
  \bar I_0,&\  & 0\le t < \bar {T}&\mbox{\quad ---\quad light  phase}\;,\\
  0,&\  & \bar {T}\le t < {T}&\mbox{\quad ---\quad dark phase}\;.
\end{array}\right.
$$
In a model where the time-unit is the day, ${T}$ will be equal $1$. At the equinoxes, we have that $\bar {T}=\frac{{T}}{2}$, but this quantity obviously depends on the time of the year.

\subsection{Reduction and generalization of the model}

The system for which we want to build an optimal controller is therefore 
\begin{equation}
\label{our_monod}
\left\{
\begin{array}{lll}
  \dot S& =&\displaystyle D(S_{in}-S)-k \frac{1}{aXL}\ln\left(\!\frac{I_0(t)+K_I}{I_0(t)\mathrm e^{-aXL}+K_I}\!\right)\! \mu_S(S) X\;,\\
  \dot X& =& \displaystyle\frac{1}{aXL}\ln\left(\!\frac{I_0(t)+K_I}{I_0(t)\mathrm e^{-aXL}+K_I}\!\right)\!\mu_S(S)X-rX-DX\;.
\end{array}
\right.
\end{equation}
However, in order to maximize the productivity it is clear that the larger $S$
the better: since it translates into larger growth rates of the biomass
$X$. Hence, the control of the inflow concentration of the substrate $S_{in}$
should always be kept very large so as to always keep the substrate in the
region where $\mu_S(S)\approx \bar \mu$. We can then concentrate on the reduced
model 
\[
  \dot X = \frac{\bar \mu}{a L}\ln\frac{I_0(t)+K_I}{I_0(t)\mathrm e^{-aXL}+K_I}-rX-DX\;,
\]
where the only remaining control is the dilution $D$ and which then
encompasses all the relevant dynamics for the control problem. 
As was seen in \cite{Masci2010}, the relevant concentration in the photobioreactor, with
Beer-Lambert light attenuation, is not the volumic density $X$ but rather the surfacic
density $x=XL$: the evolution of this quantity is indeed independent of the reactor's
depth: whether we consider a deep and very diluted reactor or a shallow high
concentration reactor with identical surfacic density  
\begin{equation}\label{reduced-monod}
\begin{array}{lll}
\dot x&=& \frac{\mathrm d(XL)}{\mathrm dt}=L\left(\frac{\bar\mu}{aL}\ln\frac{I_0(t)+K_I}{I_0(t)\mathrm e^{-aXL}+K_I}-rX-DX\right)\\
&=& \displaystyle\frac{\bar\mu}{a}\ln\displaystyle\frac{I_0(t)+K_I}{I_0(t)\mathrm e^{-ax}+K_I}-rx-Dx\;,
\end{array}
\end{equation}
which is indeed independent of the depth $L$.

The reduced model (\ref{reduced-monod}) can be rewritten by taking advantage of the special form of varying light source as follows
\begin{equation}\label{reduced-monod-bang}
\dot x= \left\{\begin{array}{llll}
\displaystyle f(x)-rx-Dx,&\  & 0\le t < \bar {T}\;,&\\
   -rx-Dx,&\  & \bar{T}\le t<{T}\;.&
\end{array}\right.
\end{equation}
with 
\[f(x)=  \frac{\bar \mu}{a}\ln\frac{\bar I_0+K_I}{\bar I_0\mathrm e^{-ax}+K_I}\].

This model is quite simple except for the only nonlinear term which directly
comes from the form of $\mu_I$ and from the Beer-Lambert law (\ref{eq:Iz}). In order to generalize our approach to more light responses ({\it e.g.} for high density photobioreactor with possible high light inhibition , see \cite{Bernard2011}) and to not restrict ourselves to the Beer-Lambert law, we notice that the function
$f(x)$ is zero in zero,
increasing in $0$, bounded and strictly concave in $x$. Such a choice of $f(x)$ yields the following  somewhat trivial property, that will prove important in the following, and that is linked to $f(x)$ being concave, with $f(0)=0$ and $f'(0)>0$:
\begin{Property}\label{PropTRIVIALE}
With $f:\RR^+\rightarrow \RR$ strictly concave and such that  $f(0)=0$ and $f'(0)>0$, we have that, for any $x>0$:
\[
f'(0)>\frac{f(x)}{x}>f'(x)\;,
\]
which also shows that $\liminf_{x\rightarrow+\infty}f'(x)\leq 0$. 

Moreover \[
 \frac{\mathrm d}{\mathrm dx}\left(\frac{f(x)}{x}\right)<0\;.
\]
\end{Property}

\begin{proof} We have $f(x)=f(0)+\int_0^x f'(s)\mathrm ds$ with $f(0)=0$ and $f'(0)>f'(s)>f'(x)$ for all $0<s<x$ due to the concavity, so that $f(x)<\int_0^x f'(0)\mathrm ds=f'(0)x$ and $f(x)>\int_0^x f'(x)\mathrm ds=f'(x)x$. 

The decreasing of $\frac{f(x)}{x}$ comes from the explicit computation $\frac{\mathrm d}{\mathrm dx}\left(\frac{f(x)}{x}\right)=\frac{f'(x)x-f(x)}{x^2}$ which is negative thanks to the previous property. 
\end{proof}

The four properties that we impose on $f(x)$ should
be expected from the net-growth in a photobioreactor: no growth should take place in the absence of biomass, it should be increasing
because additional biomass should lead to more growth (at least at low densities); it should be bounded
because, when $x$ is very large the bottom of the reactor is in the dark, so
that adding new biomass simply increases the dark zone without allowing additional growth; and the per-capita growth-rate $\frac{f(x)}{x}$ should be decreasing  because additional biomass slightly degrades the environment
for all because of the shadowing it forces.

As a generalization of the model (\ref{reduced-monod}), we consider that, during the day of length $\overline{T}<T$, the system is written as 
\begin{equation}\label{day}
\dot x=f(x)-rx-ux\;,
\end{equation}
with $f(0)=0, f'(x)>0, f''(x)< 0$, $f(x)$ bounded, $r>0$ and $x,u\,\in\RR^+$ and, during the night of length $T-\overline{T}$, as
\begin{equation}\label{night}
\dot x=-rx-ux\;.
\end{equation}
In order to couple both these systems, we define $h(t)$ as a step function whose value is $1$ for $t<\overline{T}$ and $0$ for $t\geq \overline{T}$ that will allow to synthetize (\ref{day})-(\ref{night}) in the form
\begin{equation}\label{total}
\dot x=f(x)h(t)-rx-ux\;.
\end{equation}
As we will see, the central property that will be exploited is the strict concavity of $f(x)$; in the following, we will use the simpler term ``concavity'' to denote that property.

Lastly, in practice $u$ cannot take any value: it should be positive, but also upper-bounded since its value is determined by the maximum capacity of some pumps. In the following, we will consider that $0\leq u(t)\leq \bar{u}$ at all times (with $\bar u>0$).  

\section{Productivity optimization in constant light environment}\label{sec:constant}

In a constant light environment ({\it i.e.} $h(t)$ is constant), we will be able to exploit the system at a
steady state. In that case, the dynamics are strictly imposed by (\ref{day})
and the constant $u$ and corresponding equilibrium value $x^*(u)$ are chosen
in such a way that the maximum of the biomass flow rate at the output of the
reactor with the upper surface $A$ is reached. This is defined as $\max_{u}
(uA) x^*(u)$ and constitutes this productivity analysis.   

Since $ux=f(x)-rx$ at equilibrium, maximizing $uA x$ amounts to maximizing $f(x)-rx$. $f(x)$ being concave, this is equivalent to find the unique solution of 
\[
f'(x)=r\;,
\]
when it exists. Noting that $\lim_{x\rightarrow+\infty}f'(x)\leq 0$ and that
$f'(.)$ (see Property \ref{PropTRIVIALE}) is a decreasing function of $x$ because of the concavity of $f$, this
last equality has a positive solution if $f'(0)>r$ is satisfied. This
condition is crucial since, without it, the only equilibrium of the system is
$x=0$, independently of the choice of $u$. Under this assumption, the biomass
density that corresponds to the maximization of the productivity in a constant
light environment is  
\begin{equation}\label{xsigma}
x_\sigma=(f')^{-1}(r)\;.
\end{equation}
so that  that $f(x)-rx$ is increasing for $x<x_\sigma$ and decreasing for $x>x_\sigma$, which will extensively use in the following. We then obtain
\begin{equation}\label{usigma}
u_\sigma=\frac{f(x_\sigma)}{x_\sigma}-r=\frac{f\left[(f')^{-1}(r)\right]}{(f')^{-1}(r)}-r\;,
\end{equation}
which is positive by definition of $x_\sigma$. In that case, the optimal instantaneous surfacic productivity $u_\sigma x_\sigma= f(x_\sigma)-r x_\sigma$% is
%\[
% \left[f\left[(f')^{-1}(r)\right]-r(f')^{-1}(r)\right]\;.
%\]

Taking into account the actuator constraint, if $u_\sigma\leq \bar{u}$, it yields the optimal productivity. Indeed, since it satisfies
\[
 \frac{f(x^*(u))}{x^*(u)}-r=u\;.
\]
with  $\frac{f(x)}{x}$ decreasing because of Property \ref{PropTRIVIALE}, $x^*(u)$ is a decreasing function which is larger than $x_\sigma$ for $u<u_\sigma$. The productivity $ux^*(u)=f(x^*(u))-rx^*(u)$ is then an increasing function of $u$ because $f(x)-rx$ is decreasing as long as $u<u^{\sigma}$ and the optimal  productivity is obtained with $u=\bar{u}$. 

For convenience, we will define two other equilibria beyond $x_\sigma$: the equilibria of (\ref{day}) for $u=0$ and $u=\bar{u}$, that we will denote $\bar x^0$ and $\bar x^{\bar{u}}$ respectively. 

%If $u_\sigma\leq \bar{u}$, we then have $\bar x^{\bar{u}}\leq x_\sigma<\bar x^0$, else $x_\sigma<\bar x^{\bar{u}}<\bar x^0$\com{CA EMBROUILLERAIT PLUTOT, C'EST EXPLIQUE PLUS LOIN}{ because $f(x)$ is concave}. Indeed, for ${u}$ constant, the equilibrium $\bar x$ is solution of 
%\[
%\frac{f(\bar x)}{\bar x}-r=u\;.
%\]
%From Remark~1, $\frac{f(\bar x)}{\bar x}$ is a decreasing function, so that $\bar x$ is a decreasing function of ${u}$. In the following, we will extensively use the property

Note that the study of the present section is in line with our work in
\cite{Masci2010}, where we considered the productivity optimization in a
constant light for a Droop model with light attenuation. In that study, the
analysis was much complicated by the link between shading and nitrogen content
of the algae, so that both $S_{in}$ and $D$ had to be taken into account.

%\end{document}

\section{Productivity optimization in day/night environment}\label{sec:varying}

In an environment with varying light,a non constant input must be considered. Here we consider the case where the photobioreactor is operated on the long term, with a daily biomass production from the reactor outlet. The problem that we thus consider is the 
maximization of the biomass production over a single day 
\[
\max_{u(t)\,\in\,[0,\bar{u}]} \int_0^T (u(t)A) x(t)\mathrm dt\;,
\]
% \com{JE NE SUIS PAS CONVAINCU PAR CETTE PARTIE, ET LA CONCLUSION NE ME PARAIT PAS CLAIRE. MAIS JE NE PENSE PAS QU'IL SOIT UTILE DE SE POSITIONNER PAR RAPPORT A LA STRATEGIE AVEC UNE PHASE DE DEMARRAGE. C'EST UN CHOIX OPERATIONNEL.}
% {which is not necessarily equivalent to the infinite horizon problem where the average daily production is optimized}
% \[
% \max_{u(t)\,\in\,[0,\bar{u}]} \lim_{s\rightarrow\infty}\frac{1}{s}\int_0^s u(t)A y(t)\mathrm dt\;,
% \]
% {\color{red} because we could very well obtain a strategy optimizing the latter by having
% the microalgae grow one day before harvesting them at the end of the
% next. However, we do not expect this to happen since respiration at night
% would be very penalizing to the large microalgae population that would ensue
% in between the two days. We have not been able to check the difference between
% both problems since the infinite-horizon one is much more complex to solve but
% we consider that our single-day problem is a good proxy for the solution of
% the infinite-horizon problem.}
% 
% {\color{red}Also, we do not need to take transients into account since, in considering
% optimization in the long-run, we essentially suppose that regime has been
% reached. Though we do not show equivalence between both optimization problems,
% we therefore believe that the former formulation makes sense from a practical
% point of view.  }

We are then looking for a periodic regime, where, the photobioreactor is operated identically each day. This means that the initial condition at the beginning of the day should equal the final condition at the end of the day. This then requires
that we add the constraint 
\[
x(T)=x(0)\;.
\]
In actual applications, the length of the bright phase will change slightly
from one day to the next. This will probably impose a slight change of biomass
at the beginning of the next day but, in this preliminary study, we will
consider that such a phenomenon has little effect on the qualitative form of
the solutions.   

We therefore are faced with the following Optimal Control Problem:
\begin{equation}\label{OCP}
\begin{array}{l}
\displaystyle\max_{u(t)\,\in\,[0,\bar{u}]} \int_0^T u(t) x(t)\mathrm dt\\
\hspace*{1cm}\mbox{with}\hspace*{0.5cm}\dot x = f(x)h(t)-rx-ux\;,\\
\hspace*{2.15cm}x(T)=x(0)\;,\\
\end{array}
\end{equation}. 

\subsection{Dealing with the T-periodicity}

In order to solve this problem, it is convenient to observe that $x(T)=x(0)=x_0$
cannot be achieved for all values of $x_0$ even without requiring
optimality. For that, we consider the best case scenario, that is the one where $u=0$ for all $t$, which yields the
largest value of $x(T)$ since no biomass is taken out of the system (this is
also seen by comparing the system  with $u=0$ to systems with any $u=u(t)$); the value of
$x(T)$ obtained in the closed photobioreactor must then be larger than $x_0$
for (\ref{OCP}) to have a chance to have a solution that starts in that $x_0$.  
In this section, we will give a condition that guarantees that the set of initial conditions $x_0$ that yield $x(T)>x_0$ in a closed photobiotreactor is contained in an interval $\mathcal I=[0,x_{0max}]$, inside which the initial condition in the solution of (\ref{OCP}) will lie.  
 
The first observation that we can make is that, for $x_0\geq \bar x^0$ (the equilibrium of system (\ref{day}) with $u=0$), we have $x(T)<x_0$ because $x(t)$ is then decreasing all along the solution. The upper-bound $x_{0max}$, if it exists, is therefore smaller than $\bar x^0$. 

In order to guarantee that the interval is non-empty (or equivalently that $x_{0max}>0$) we will then concentrate on what happens for $x_0$ 
% Using
%  the fact that, when $x_0=\bar x^0$, we have $x(T)<x_0$, there exists a $x_0$
 % such that $x(T)=x_0$ only if, for $x_0$ small we have $x(T)>x_0$. Let us
 % consider $x_0$ 
in a small neighborhood of $0$. The (\ref{day}) dynamics with
  $u=0$ can then be rewritten as 
\[
\dot x=( f'(0)-r)x\;,
\]
so that $x(\overline{T})=x_0\mathrm e^{(f'(0)-r)\overline{T}}$ and $x(T)=x(\overline{T})\mathrm e^{-r(T-\overline{T})}=  x_0\mathrm e^{( f'(0)-r)\bar T}\mathrm e^{-r(T-\overline{T})}$. We then have $x(T)>x_0$ for $x_0$ small if the exponential factor is larger than $1$, that is if: 
\begin{assumption}\label{fprimemin} The growth function $f(x)$ is $\mathcal{C}^1$, bounded, satisfies $f(0)=0$, $f'(0)>0$, $f''(x)<0$ for all $x\geq 0$ and 
\begin{equation}\label{mumin}
 f'(0) \bar{T}>rT\;.
\end{equation}
\end{assumption}
This condition is quite natural since it imposes that, when the population is
small, that is when the per capita growth rate is the largest, growth during
the daylight period exceeds the net effect of respiration that takes place
% during the whole day. Assumption \ref{fprimemin} is a necessary condition for
% (\ref{OCP}) to have a solution. This solution then has $x_0$ smaller than the
% unique value $x_{0max}$ obtained from  
% \[
% \int_{x_{0max}}^{x_{0max}\mathrm e^{r(T-\overline{T})}}\frac{1}{f(\xi)-r\xi}\>\mathrm d\xi=\overline{T}\;.
% \]
We have shown that this condition is sufficient but it is also necessary for $x_{0max}>0$ since Property \ref{PropTRIVIALE} imposes that
\[
 \dot x=f(x)h(t)-rx-ux\leq (f'(0)h(t)-r-u)x
\]
so that, for a given $x_0$, $x(T)$ in a closed photobioreactor is always smaller with the nonlinear system than the value $x^l(T)$ obtained with its linear upper approximate. A necessary condition for $x(T)>x_0$ is therefore $x^l(T)>x_0$, which amount to Assumption \ref{fprimemin}, which is therefore necessary for $x_{0max}>0$. 

Further properties are summed-up in the following proposition
\begin{prop}
 If Assumption \ref{fprimemin} is satisfied
\begin{itemize}
 \item System (\ref{total}) has a unique initial condition $x_0=x^f_0$ which is such that $x(T)=x_0$ if $u(t)=0$ for all times. It is the unique fixed point of
\[
\int_{x_0}^{x_0\mathrm e^{r(T-\overline{T})}}\frac{1}{f(\xi)-r\xi}\>\mathrm d\xi=\overline{T}\;.
\]
\item This value $x^f_0$ is such that, with $u(t)=0$ at all times, $x(T)>x_0$ for all $x_0<x^f_0$, and $x(T)<x_0$ for all $x_0>x^f_0$ so that  $x_{0max}=x^f_0$.
\item If
\begin{equation}\label{x0max}
 f'(0)\overline{T}>(r+\bar{u})T\;,
\end{equation}
there is a unique $x_0=x_{0min}$ such that  $x(T)=x_0$ if $u(t)=\bar u$ for all times. Also $x_0$ solution of (\ref{OCP}) belongs to the interval $[x_{0min},x_{0max}]$. 
\end{itemize}
\end{prop}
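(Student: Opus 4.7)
\medskip

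The plan is to treat the three items in sequence, all resting on a single comparison device. For the integral characterisation in the first item, I would integrate (\ref{total}) explicitly under $u\equiv 0$. The night equation $\dot x=-rx$ is linear and gives $x(T)=x(\overline{T})\,\mathrm e^{-r(T-\overline{T})}$, while the day equation $\dot x=f(x)-rx$ is autonomous and separable, yielding $\int_{x_0}^{x(\overline{T})}\frac{\mathrm d\xi}{f(\xi)-r\xi}=\overline{T}$. Imposing $x(T)=x_0$ forces $x(\overline{T})=x_0\mathrm e^{r(T-\overline{T})}$, and substituting gives the displayed fixed-point equation.

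For existence of a positive $x_0^f$ I would study $g(x_0):=x(T)-x_0$ along the $u\equiv 0$ trajectory. Linearising at the origin and invoking Assumption \ref{fprimemin} gives $g(x_0)\sim x_0(\mathrm e^{f'(0)\overline{T}-rT}-1)>0$ for small $x_0>0$; at $x_0=\bar x^0$ the day trajectory is frozen at equilibrium and only the night decay acts, so $g(\bar x^0)<0$, and continuity closes existence via the intermediate value theorem. The main obstacle is uniqueness, and for it I rely on a ratio argument: for two ordered initial conditions $x_0^1<x_0^2$ the corresponding solutions $x^1(\cdot)<x^2(\cdot)$ remain ordered by standard ODE uniqueness, and $\rho(t):=x^2(t)/x^1(t)$ satisfies
\[
\frac{\dot\rho}{\rho}=\frac{f(x^2)}{x^2}-\frac{f(x^1)}{x^1}
\]
on $[0,\overline{T}]$ and $\dot\rho/\rho=0$ on $[\overline{T},T]$. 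By the strict decrease of $f(x)/x$ supplied by Property \ref{PropTRIVIALE}, $\rho(T)<\rho(0)=x_0^2/x_0^1$. If $x_0^1=x_0^f$ is a fixed point then $x^2(T)=\rho(T)\,x^1(T)<(x_0^2/x_0^1)\,x_0^1=x_0^2$, which rules out a second positive fixed point; the same inequality, applied respectively with $x_0^1=x_0^f$ and with $x_0^2=x_0^f$, further yields the full sign pattern $g>0$ on $(0,x_0^f)$ and $g<0$ on $(x_0^f,+\infty)$.

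The identification $x_{0max}=x_0^f$ then follows because any admissible control gives a pointwise-smaller $x(t)$ than the $u\equiv 0$ trajectory (elementary comparison on (\ref{total})), so $x(T)\ge x_0$ is achievable iff $x_0\le x_0^f$. The third item is proved line-for-line by the same machinery applied to $\dot x=f(x)h(t)-(r+\bar u)x$: the constant $\bar u$ is absorbed into an effective respiration $r+\bar u$, and condition (\ref{x0max}) plays exactly the role played by Assumption \ref{fprimemin} in the first two items, yielding existence and uniqueness of $x_{0min}$. Finally, the same comparison argument shows that for any $u(\cdot)\in[0,\bar u]$ the terminal value is bracketed between the $u\equiv\bar u$ and $u\equiv 0$ terminal values, so demanding $x(T)=x_0$ in (\ref{OCP}) forces $x_0\in[x_{0min},x_{0max}]$.
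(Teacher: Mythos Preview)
Your proof is correct and follows the same skeleton the paper sketches: derive the fixed-point integral by separating variables over the day and using the explicit night decay, check the sign of $g(x_0)=x(T)-x_0$ near $0$ via linearisation and near $\bar x^0$ via monotonicity, and repeat with $r\mapsto r+\bar u$ for the third item. The paper's own proof is a one-line ``analyze the dependency'' of the integral relation and gives no details; your ratio argument $\dot\rho/\rho=f(x^2)/x^2-f(x^1)/x^1<0$, exploiting Property~\ref{PropTRIVIALE}, is a clean way to supply the missing uniqueness and global sign pattern that the paper leaves implicit.
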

All this is obtained by analyzing the dependency
\[
\int_{x_0}^{x_(T)\mathrm e^{r(T-\overline{T})}}\frac{1}{f(\xi)-r\xi}\>\mathrm d\xi=\overline{T}\;.
\]
where $x(T)$ is seen as a function of $x_0$. 

Finally, since $x_0<\bar x^0$ in the solution of (\ref{OCP}), $x(t)<\bar
  x^0$ at all times. Indeed, in the bright phase, $x(t)$ cannot go through
  $\bar x^0$ since the choice of $u$ that maximizes $\dot x$ is $u=0$ that
  simply imposes convergence in infinite time toward $\bar x^0$; in the dark
  phase, $\dot x<0$ which also prevents $x(t)$ from going through $\bar x^0$. 
%\end{itemize}

\subsection{Maximum principle}

In this section, we will show that the solution of (\ref{OCP}) can have one of
three patterns. In order to solve problem (\ref{OCP}), we will use
Pontryagin's Maximum Principle (PMP, \cite{Pont}) in looking for a control law
maximizing the Hamiltonian  
\[
H(x,u,\lambda,t)\triangleq \lambda\left(f(x)h(t)-rx-ux\right)+ux\;,
\]
with the constraint
\begin{equation}\label{x-lambda}
\left\{\begin{array}{lll}
\dot x&=& f(x)h(t)-rx-ux\;,\\
\dot \lambda&=&\lambda\left(-f'(x)h(t)+r+u\right)-u\;.
\end{array}\right.
\end{equation}
In addition, we should add the constraint
\[
\lambda(T)=\lambda(0)\;.
\]
%Indeed, the solution of the optimal control problem is independent of the
%reference initial time: defining $x(t)=x(t-T)$,  $u(t)=u(t-T)$ and
%$\lambda(t)=\lambda(t-T)$ for $t>T$, we have that $x(t)$, $u(t)$ and therefore
%$\lambda(t)$ are unchanged if we consider the interval $[t_0,T+t_0]$ (for
%$0<t_0<T$) rather than $[0,T]$. Since $\lambda(t)$ is continuous inside the
%interval when considering the problem over $[t_0,T+t_0]$, it is continuous in
%time $T$ and $\lambda(0)=\lambda(T)$ (this is rigorously 
as shown in \cite{Gilbert}. We see from the form of the Hamiltonian that  
\[
\frac{\partial H}{\partial u}=(1-\lambda)x\;,
\]
so that, when $\lambda>1$, we have $u=0$, when $\lambda<1$, we have $u=\bar{u}$, and when $\lambda=1$ over some time interval, intermediate singular control might be applied.

Of paramount importance in the proofs will be the constancy of the
Hamiltonian. Indeed, it is known that the Hamiltonian is constant along
optimal solutions as long as the time does not intervene into the dynamics or
the payoff. We then have that the Hamiltonian is constant in the interval
$[0,\overline{T})$ (with $h(t)=1$) and in the interval $(\overline{T},T]$
(with $h(t)=0$). The Hamiltonian presents a discontinuity at $\overline{T}$. 

We can first give some general statements about where and when switches can occur 
\begin{proposition}\label{prop1} If Assumption \ref{fprimemin} is satisfied then, in the solution of problem (\ref{OCP}) 
\begin{itemize}
\item[(i)] No switch from $u=0$ to $u=\bar{u}$ can take place in the dark phase;
\item[(ii)] Switches from $u=0$ to $u=\bar{u}$ in the bright phase take place with $x\leq x_\sigma$;
\item[(iii)] Switches from $u=\bar{u}$ to $u=0$ in the bright phase take place with $x\geq x_\sigma$.
%\item[(iv)] If a switch occurs for $x(t)=x^a_s$ with $t\leq \overline{T}$, all other switches taking place with $\tilde t\leq \overline{T}$ either satisfy $x(\tilde t)=x^a_s$ or $x(\tilde t)=x^b_s$ with $x^b_s$ the only other value that satisfies $f(x^b_s)-rx^b_s=f(x^a_s)-rx^a_s$. 
\end{itemize}
\end{proposition}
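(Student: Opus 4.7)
The plan is to exploit the switching condition derived from $\partial H/\partial u=(1-\lambda)x$: since $x(t)>0$ throughout (the set $\{x=0\}$ is invariant under \eqref{x-lambda}), bang-bang switches of $u$ occur exactly when $\lambda$ crosses the value $1$. The whole proof therefore reduces to computing the sign of $\dot\lambda$ at times when $\lambda=1$ and matching that sign with the direction of the switch (from $u=0$ to $u=\bar u$ requires $\lambda$ decreasing through $1$, and conversely).

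Substituting $\lambda=1$ into the adjoint equation in \eqref{x-lambda} gives the clean expression
\[
\dot\lambda\big|_{\lambda=1}=r-f'(x)h(t),
\]
in which the control $u$ has conveniently dropped out. I would then split into cases according to $h(t)$. In the dark phase ($h(t)=0$), this reduces to $\dot\lambda|_{\lambda=1}=r>0$, so $\lambda$ must be strictly crossing from below $1$ to above $1$, which corresponds to a switch $u=\bar u\to u=0$ and rules out the opposite direction, giving (i). In the bright phase ($h(t)=1$), one has $\dot\lambda|_{\lambda=1}=r-f'(x)$; a switch from $u=0$ to $u=\bar u$ requires $\lambda$ to be non-increasing at the crossing, hence $f'(x)\ge r=f'(x_\sigma)$, and the concavity of $f$ (which makes $f'$ strictly decreasing) yields $x\le x_\sigma$, proving (ii); the symmetric argument with the opposite inequality yields (iii).

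The one delicate point is justifying that a crossing of $\lambda$ through $1$ with $\dot\lambda|_{\lambda=1}$ of a definite sign really must come from, and go to, the bang values associated with $\lambda>1$ and $\lambda<1$. This is essentially immediate when $\dot\lambda|_{\lambda=1}\neq 0$ (strict crossing), so case (i) is unambiguous. In the bright phase one has to admit the degenerate possibility $x=x_\sigma$ at the switching instant, where $\dot\lambda|_{\lambda=1}=0$ and a singular arc may be initiated or terminated; this is precisely why the statements of (ii) and (iii) are written with non-strict inequalities rather than strict ones, so no extra work is needed there. The main technical obstacle is really just checking that $x(t)>0$ on the whole interval so that the sign of $\partial H/\partial u$ is governed exclusively by $1-\lambda$; this follows from $f(0)=0$ making $\{x=0\}$ invariant and from the fact that a trivial $x\equiv 0$ trajectory cannot be optimal given Assumption \ref{fprimemin}.
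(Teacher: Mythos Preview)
Your proof is correct and follows essentially the same route as the paper: both evaluate $\dot\lambda$ at the switching instant $\lambda=1$ (where the $u$-dependent terms cancel), obtain $\dot\lambda|_{\lambda=1}=r-f'(x)h(t)$, and then read off (i)--(iii) from the sign of this quantity together with the strict concavity of $f$. The additional care you take in justifying $x(t)>0$ and in discussing the degenerate case $x=x_\sigma$ is welcome but not something the paper dwells on.
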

\begin{proof}
All these results come from the analysis of $\dot \lambda$ at the switching instant. Indeed, at a switching instant, we have $\lambda=1$ so that:
\begin{equation}\label{atswitch}
\dot \lambda=\lambda\left(
-f'(x)h(t)+r\right)\;.
\end{equation}
The form of $\frac{\partial H}{\partial u}$ indicates that a switch from $u=0$ to $u=\bar{u}$ (resp. from $u=\bar{u}$ to $u=0$) can only occur if $\dot \lambda\leq 0$ (resp. $\dot \lambda\geq 0$).

In the dark phase, (\ref{atswitch}) becomes $\dot \lambda=r\lambda>0$; no switch from $u=0$ to $u=\bar{u}$ can therefore take place (this shows (i)).

In the bright phase, $\dot \lambda\leq0$ at a switching instant if $f'(x)\geq r$, which is equivalent to having $x\leq x_\sigma$; this is therefore a condition for a switch from $u=0$ to $u=\bar{u}$ (this shows (ii)).

Conversely, in the bright phase, $\dot \lambda\geq 0$ at a switching instant if $f'(x)\leq r$, which is equivalent to having $x\geq x_\sigma$; this is therefore a condition for a switch from $u=\bar{u}$ to $u=0$ (this shows (iii)).

%Finally, if a switch occurs for $x(t)=x^a_s$, the value of the $\lambda(t)=1$, so that the value of the Hamiltonian is  
\end{proof}

In the following, we will propose candidate solutions to the PMP by making various hypotheses on the value of $\lambda(0)=\lambda_0$.

\begin{theorem}\label{thm1}  If Assumption \ref{fprimemin} is satisfied then three forms of solutions of problem (\ref{OCP}) are possible:
\begin{itemize}
\item Bang-bang with $u(0)=0$, a single switch to $u(t)=\bar{u}$ taking place strictly before $\overline{T}$ and  a single switch back to $u(t)=0$ taking place strictly after $\overline{T}$;
\item Bang-singular-bang with $u(0)=0$, a switch to $u(t)=u_\sigma$ taking
  place first strictly before $\overline{T}$, a single switch to
  $u(t)=\bar{u}$ also taking place strictly before $\overline{T}$ and  a
  single switch back to $u(t)=0$ taking place strictly after $\overline{T}$. 
\item Constant control at $u(t)=\bar{u}$
\end{itemize}
In the first two cases, the switch back to $u(t)=0$ takes place with $x$ strictly smaller than it was at the moment of switch to $u(t)=\bar{u}$.
\end{theorem}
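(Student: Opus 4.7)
My plan is to combine Pontryagin's Maximum Principle through Proposition~\ref{prop1} with two scalar invariants --- constancy of the Hamiltonian on each of the bright and dark intervals, and the $T$-periodicity of $\lambda$ --- and to case-split on the sign of $\lambda(0)-1$. I would first observe that on any interval where $\lambda\equiv 1$, the identity $\dot\lambda=0$ reduces to $-f'(x)h(t)+r=0$, which is infeasible in the dark phase (it would require $r=0$) and forces $x=x_\sigma$, hence $u=u_\sigma$, in the bright phase. So any singular piece must lie in $[0,\overline T)$ at the unique point $(x,u)=(x_\sigma,u_\sigma)$.

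Next I would case-split on $u(0)$. If $u(0)=\bar u$ (so $\lambda(0)<1$), any switch to $u=0$ occurs at $\lambda=1$ and is followed by $\dot\lambda\ge 0$: in the dark phase $\dot\lambda=r\lambda>0$, and in the bright phase (at some $x\ge x_\sigma$ by Proposition~\ref{prop1}(iii)) $\dot\lambda=\lambda(r-f'(x))\ge 0$, with $\dot x>0$ then pushing $x$ and hence $\dot\lambda$ strictly upward. In either case $\lambda$ leaves $1$ upward, Proposition~\ref{prop1}(i) prevents any return to $u=\bar u$ inside the dark phase, and $\lambda(T)=\lambda(0)<1$ fails. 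Hence $u\equiv\bar u$, giving case~(3). If $u(0)=0$ (so $\lambda(0)>1$), the bright phase must contain a $0\to\bar u$ switch, possibly preceded by an entry into the singular arc at $x_\sigma$: otherwise $u\equiv 0$ on the whole day by Proposition~\ref{prop1}(i), yielding zero productivity, which is suboptimal as soon as any positive-productivity trajectory is admissible (guaranteed under Assumption~\ref{fprimemin}). Multiple bangs inside the bright phase are ruled out because after a $\bar u\to 0$ switch at $x\ge x_\sigma$ the dynamics $\dot x=f(x)-rx>0$ keep $x$ above $x_\sigma$, so Proposition~\ref{prop1}(ii) forbids any subsequent $0\to\bar u$ switch, and symmetrically the other way. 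In the dark phase, $u\equiv\bar u$ throughout would keep $\lambda<1$ at $T$, contradicting $\lambda(0)>1$, so exactly one $\bar u\to 0$ switch occurs at some $t_3>\overline T$. This yields cases~(1) and~(2); the borderline $\lambda(0)=1$ is absorbed into case~(2), and the PMP candidate $0\to u_\sigma\to 0$ (exiting the singular arc directly to $u=0$) is strictly dominated by bang-singular-bang and hence non-optimal.

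For the strict inequality $x(t_3)<x(t_1)$, I would exploit Hamiltonian constancy. On the bright interval, evaluating $H$ at $t=0^+$ (where $u=0$, $\lambda=\lambda(0)$, $x=x_0$) and at the switch to $\bar u$ (where $\lambda=1$, $u=0$, $x=x_1$, with $x_1=x_\sigma$ in the bang-singular-bang case) gives
\[
\lambda(0)\bigl(f(x_0)-rx_0\bigr)=f(x_1)-rx_1\;.
\]
On the dark interval, evaluating at the switch back to $0$ (where $\lambda=1$, $u=0$, $x=x_3$) and at $t=T^-$ (where $u=0$, $\lambda=\lambda(0)$, $x=x(T)=x_0$) gives $\lambda(0)\,rx_0=rx_3$, so $\lambda(0)=x_3/x_0$. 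Eliminating $\lambda(0)$ yields
\[
x_3=x_1\cdot\frac{f(x_1)/x_1-r}{f(x_0)/x_0-r}\;.
\]
Since $x_0<x_1$ (because $\dot x=f(x)-rx>0$ while $u=0$ in the bright phase, $x$ remaining below $\bar x^0$) and $f(x)/x$ is strictly decreasing by Property~\ref{PropTRIVIALE}, the ratio is strictly less than~$1$, so $x_3<x_1$, as claimed. The main technical hurdle lies in the exhaustive enumeration of paragraph two --- ruling out patterns such as $\bar u\to 0\to\bar u$ inside the bright phase, checking that the singular arc can be entered from $u=0$ and exited into $u=\bar u$ consistently with the dynamics of $\lambda$, and showing that $0\to u_\sigma\to 0$ is strictly dominated by inserting a short $\bar u$ bang before $\overline T$.
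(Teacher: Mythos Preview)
Your high-level strategy coincides with the paper's: apply the PMP, case-split on the sign of $\lambda(0)-1$, and use Proposition~\ref{prop1} together with constancy of the Hamiltonian on each of $[0,\overline T)$ and $(\overline T,T]$. Your derivation of the last assertion ($x_3<x_1$) is in fact cleaner than the paper's: from $\lambda(0)(f(x_0)-rx_0)=f(x_1)-rx_1$ and $\lambda(0)x_0=x_3$ you obtain $x_3=x_1\cdot\dfrac{f(x_1)/x_1-r}{f(x_0)/x_0-r}$, and $x_0<x_1$ with Property~\ref{PropTRIVIALE} gives the strict inequality directly, uniformly in both the bang-bang and bang-singular-bang cases. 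The paper instead splits according to whether $x_{0\bar u}\gtrless \bar x^{\bar u}$ and, in the delicate sub-case $x_{0\bar u}<\bar x^{\bar u}$, invokes the concavity inequality $f(x_0)\mathrm e^{r(T-t_{\bar u 0})}>f\bigl(x_0\mathrm e^{r(T-t_{\bar u 0})}\bigr)$ together with an auxiliary bound $f(x_{\overline T})-rx_{\overline T}>f(x_{0\bar u})-rx_{0\bar u}$; your ratio argument avoids all this.

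There is, however, a genuine gap in your enumeration. In the case $\lambda(0)>1$ you must show that the switch back to $u=0$ occurs \emph{strictly after} $\overline T$; this is part of the statement. Your argument for this is the ``symmetrically the other way'' claim: after the $0\to\bar u$ switch at some $x\le x_\sigma$, the $\bar u$-dynamics keep $x$ below $x_\sigma$, so Proposition~\ref{prop1}(iii) blocks any $\bar u\to 0$ switch before $\overline T$. But this monotonicity holds only when $\bar x^{\bar u}\le x_\sigma$, i.e.\ when $\bar u\ge u_\sigma$. When $\bar u<u_\sigma$ (a regime the theorem must cover, and in which bang-bang optima do occur, cf.\ the bifurcation analysis), one has $\bar x^{\bar u}>x_\sigma$, the solution with $u=\bar u$ can cross $x_\sigma$ upward, and a $\bar u\to 0$ switch at some $t_{\bar u 0}\le\overline T$ with $x\ge x_\sigma$ is perfectly compatible with Proposition~\ref{prop1}. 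Nothing then forces $u=\bar u$ at $\overline T^-$, so your dark-phase argument (``$u\equiv\bar u$ throughout would keep $\lambda<1$'') never engages. The paper closes this gap with a different use of Hamiltonian constancy: if $u=0$ at both $0^+$, $\overline T^-$, $\overline T^+$ and $T^-$, then equating the bright-phase and dark-phase Hamiltonians at these endpoints yields $\lambda_0 f(x_0)=\lambda_{\overline T}f(x_{\overline T})$ and $\lambda_0 x_0=\lambda_{\overline T}x_{\overline T}$, hence $f(x_0)/x_0=f(x_{\overline T})/x_{\overline T}$, contradicting $x_{\overline T}>x_0$ and the strict monotonicity of $f(x)/x$. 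The very same identity disposes of the $0\to u_\sigma\to 0$ pattern, which you instead try to eliminate by a ``strictly dominated'' comparison; that is not a PMP argument and would require a separate (and non-trivial) variational estimate, whereas the Hamiltonian identity settles it inside the necessary conditions.
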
 
The proof of this result is detailed in appendix and is obtained by considering all possble situations that are in concordance with Proposition (\ref{prop1}) and eventually eliminating all possibilities but the three cases detailed in Theorem  \ref{thm1}.

\subsection{Synthesis: the three possible optimal solutions}

Without needing an explicit form of $f(x)$, we have been able to obtain the
qualitative form of the optimal solution analytically. In the bang-bang case,
it is made of four phases:  

\begin{solution}{Bang-bang}\label{sol1}
\begin{enumerate}
\item Growth with a closed photobioreactor until a sufficient biomass level is
  reached ($u=0$, $\dot x>0$, $\lambda>1$, $\dot \lambda<0$);
\item Maximal harvesting of the photobioreactor with simultaneous growth ($u=\bar u$, $\dot x$ not determined, $\lambda<1$, $\dot \lambda<0$);
\item Maximal harvesting of the photobioreactor with no growth until a low level of
  biomass is reached ($u=\bar u$, $\dot x < 0$, $\lambda<1$, $\dot \lambda>0$);
\item Passive photobioreactor: no harvesting, no growth, only respiration ($u=0$, $\dot x < 0$, $\lambda>1$, $\dot \lambda>0$).
\end{enumerate}
\end{solution}
The first two phases take place in the presence of light, the other two in the dark. In phase~3, harvesting of as much biomass produced in the light phase as possible is continued while not going below the level where  the residual biomass left is sufficient to efficiently start again the next day.

If the optimal solution contains a singular phase, the analytical approach has helped us to identify the qualitative form of the optimal productivity solution. It now contains five phases:
\begin{solution}{Bang-singular-bang}\label{sol2}
\begin{enumerate}
\item Growth with a closed photobioreactor until the $x_\sigma$ biomass level is reached ($u=0$, $x<x_{\sigma}$, $\dot x>0$, $\lambda>1$, $\dot \lambda<0$);
\item Maximal equilibrium productivity rate on the singular arc ($u=u_\sigma$, $x=x_{\sigma}$, $\dot x=0$, $\lambda=1$, $\dot \lambda=0$);
\item Maximal harvesting of the photobioreactor with simultaneous growth ($u=\bar u$, $x<x_{\sigma}$, $\dot x<0$, $\lambda<1$, $\dot \lambda<0$);
\item Maximal harvesting of the photobioreactor with no growth until a low level of biomass is reached ($u=\bar u$, $x<x_{\sigma}$, $\dot x<0$, $\lambda<1$, $\dot \lambda>0$);
\item Passive photobioreactor: no harvesting, no growth, only respiration ($u=0$, $x<x_{\sigma}$, $\dot x<0$, $\lambda>1$, $\dot \lambda>0$);
\end{enumerate}
\end{solution} 

For this form of solution, we see that maximal instantaneous productivity is
achieved during the whole second phase, when the singular solution occurs
(Figure 3B). This solution with a singular form then seems to be naturally the
most efficient one. It can however not always be achieved for two reasons:  
\begin{itemize}
\item if (\ref{mumax}) is not satisfied, $u_\sigma>\bar{u}$, so that it is not
  an admissible control. The solution should then be bang-bang and the
  application of $u=\bar{u}$ has the same role as $u_\sigma$ in the solution
  with a singular arc since $u=\bar{u}$ is then the optimal solution to the
  instantaneous productivity optimization problem (Figure 3C). 
\item if growth is not sufficiently stronger than respiration (with (\ref{mumin}) satisfied however), a bang-bang solution that stays below $x_\sigma$ is optimal, because there is not enough time for it to reach $x_\sigma$ (Figure 3A).
\end{itemize}

The only remaining solution is the one with $u$ constant at $\bar u$ and is characterized by 
\begin{solution}{$u=\bar u$}\label{sol3}
\begin{enumerate}
\item Maximal harvesting of the photobioreactor with simultaneous growth ($u=\bar u$, $x<x_{\sigma}$, $\dot x>0$, $\lambda<1$, $\dot \lambda<0$);
\item Maximal harvesting of the photobioreactor with no growth ($u=\bar u$, $x<x_{\sigma}$, $\dot x<0$, $\lambda<1$, $\dot \lambda>0$);
\end{enumerate}
\end{solution} 
Since this solution relies on the satisfaction of condition (\ref{x0max}), it is clear that it could mainly occur if the actuator has been under-dimensioned and when the dark phase does not last too long.

\section{Discussion}\label{sec:large}

\subsection{Existence and uniqueness of the optimal solution}

Existence of an optimal solution is obvious since the achieved yield for
solutions satisfying  $x(t)=x(0)$ is bounded between $0$ (obtained for
$x_0=x_{0max}$ and $u=0$) and $\bar x^0\bar u A T$ (not achievable but an
upper-bound nonetheless because $x(t)\leq \bar x^0$ and $u(t)\leq \bar u$ at
all times). The productivity level therefore has a finite supremum, which
translates into a maximum since the set of definition of the $u(t)$ control
laws bounded by $[0,\bar u]$ is closed. The optimal control problem therefore has
a solution. 

We have also carried out a tedious analysis of the impossibility of existence
of two different solutions of the maximum principle by showing that, once one
solution has been evidenced, variations of the switching times 
cannot produce a second solution of the PMP satisfying both periodicity
conditions (on $x$ and $\lambda$). Once we have found a 
solution of the PMP, it is therefore optimal. This uniqueness result is not
crucial in the following discussions. Therefore, we do not detail them. 

\subsection{Large but limited $\bar u$}

Too small an upper-bound $\bar u$ for $u(t)$ gives rise to two kinds of
optimal solutions where the constraint on the control really limits the
productivity. First, when $\bar u<u_\sigma$, there could exist optimal
solutions that go through $x_\sigma$ but cannot stay at this optimal level
because the required control value is not admissible. Secondly, an optimal
solution (denoted Solution 3) could require the actuator to always be open.
In order to prevent the first case, it suffices to design the actuator and
chemostat so that $\bar u>u_\sigma$; note however that this does  not imply
that the solution of (\ref{OCP}) goes through a singular phase: growth could
very well be too slow for the solution to reach the $x_\sigma$ level during
the interval $[0,\bar{T}]$.  In the second case, the constant control
$u(t)=\bar u$ for all times also indicates that the actuator is not strong
enough since the only way the dilution can be efficient enough is by being
active during the whole dark phase; a solution where the  
dilution succeeds in harvesting the chemostat at the beginning of the night is
certainly to be preferred since it prevents respiration from taking too big a
role. Condition (\ref{x0max}) should then not be satisfied so that no periodic
solution with $u=\bar u$ exists. In this Section, we then should have that 
\[
  \bar u>\max\left(\frac{f\left[(f')^{-1}(r)\right]}{(f')^{-1}(r)}-r,\frac{f'(0)\overline{T}}{T}-r\right)\;.
\]
It is however convenient to remember from Remark~1 that $\frac{f\left[(f')^{-1}(r)\right]}{(f')^{-1}(r)}<f'(0)$, and that we trivially have $\frac{f'(0)\overline{T}}{T}<f'(0)$. We will therefore make the simpler hypothesis:
\begin{equation}\label{bigubar}
 \bar u>f'(0)-r\;.
\end{equation}
Note first that this implies that, when applying  $u=\bar u$, $\dot x=f(x)-rx-\bar u x<f'(0)x-rx-\bar u x<0$ so that $\bar x^{\bar u}=0$ and the biomass is always decreasing when the maximal dilution is applied.

When (\ref{bigubar}) is satisfied, only two forms of solutions of the PMP are possible: the bang-bang solution that never reaches the $x_\sigma$ level because the net biomass growth is too weak (respiration included) and the bang-singular-bang solution.

\subsection{Unconstrained dilution rate}

When considering that $u$ can be unbounded, there is the possibility for $\delta$ impulses to occur in the solution of the PMP. We will not give any further mathematical developments, but things readily seem clear. 

In the limit, bang-singular-bang solutions would have the following form: the
chemostat would be closed in the dark phase and at the beginning of the light phase until
$x=x_\sigma$ is reached. The solution would stay on $x=x_\sigma$ until
$t=\overline{T}$ is reached. Any earlier impulse would force the solution to
have more bang phases than possible, as demonstrated in the proof of Theorem \ref{thm1}. At $t=\overline{T}$,
a Dirac impulse is applied to bring $\lambda$ to $1$ so that $u=0$ is then
applied during the whole dark phase. The reactor then has three modes: a batch
mode when $x$ is different from $x_\sigma$, a continuous mode, on the singular
arc and an instantaneous harvest at the transition between the bright and dark
phases. The harvest consists in instantaneously replacing the medium with
biomass-free, but substrate-rich, medium, to bring the biomass concentration
to the mandated level.   

The bang-bang solutions that stay below $x_\sigma$ here become pure batch solutions with instantaneous harvest, the reactor being always closed except at $t=\overline{T}$ (where $\lambda$ is brought to $1$ through an impulse). 

We see from this analysis that, depending on the chosen microalgae and chemostat design, the optimal productivity is either obtained in batch mode or through the introduction of some continuous mode between batch phases.

\section{Simulations}\label{sec:simulations}

\subsection{{\it Isochrysis galbana}}\label{sub:low}

We will now show the forms of Solutions 1,2, and 3 in the $(t,x)$ space. For that, we start with a dynamical model (\ref{reduced-monod-bang}) for the growth of {\it Isochrysis galbana} with the parameters taken as in \cite{Masci2010}.
\begin{table}
\begin{center}$ \begin{array}{|c|c|c|}
\hline
  \mbox{Parameter}&\mbox{Value}&\mbox{Units}\\
\hline
\hline
\bar\mu & 1.7 &day^{-1}\\
a&0.5&m^2/g[C]\\
\bar I_0&1500& \mu \mbox{mol quanta}m^{-2}s\\
K_I&20&\mu \mbox{mol quanta}m^{-2}s\\
r&0.07 &day^{-1}\\
\overline{T}&0.5&day\\
\bar u&2&day^{-1}\\
\hline
 \end{array}$
\end{center}
\caption{Growth and bioreactor parameters for {\it Isochrysis galbana}}\label{Galbana}
\end{table}
With such parameters, the critical values $x_\sigma=14.93$ and
$u_\sigma=0.9066$ are easily computed, and the optimal solution is represented
by the blue curves in Figure~\ref{fig_case1}. It presents the
bang-singular-bang structure of Solution 2 with $u=0$ until time $t=0.282$
followed by $u=u_\sigma$ until $t=0.420$, $u=\bar u$ until $t=0.584$ followed
by $u=0$. The corresponding daily surfacic productivity is then $6.33
g[C]/m^2$ for a total cumulated flow $\int_0^1 u(\tau)\>\mathrm d\tau$ equal
to $0.453$, that is $45\%$ of the medium has been renewed during the $24$ hours. We then considered the application of a
constant control during the $24$ hours and optimized the level of this control
numerically. The optimum was achieved for $\hat u=0.461$, which yields a daily
productivity equal to $6.26 g[C]/m^2$ and a cumulated flow equal to $0,461$
also. Three things need to be noticed from this comparison: (i) both optimal
solutions are quite different in Figure~\ref{fig_case1} though the $x$ values
stay in the same  
range; (ii) the productivity increase generated by the optimal solution is
very weak ($1.11\%$); (iii) the total flow required to attain both optimal
solutions are very similar. In fact, the fact that the improvement of the
productivity is small is not surprising: the necessity of shutting down the
chemostat at night is linked to the respiration that would consume the
biomass. In the present case, the respiration is weak so that, during one
night, only $3.44\%$ of the biomass is consumed. This phenomenon is here
marginal so that the optimal control approach developed to limit it provides
little gain and what really matters is the total flow that goes through the
photobioreactor.   

\begin{figure}[t]
\centerline{\includegraphics[width=0.7\columnwidth]{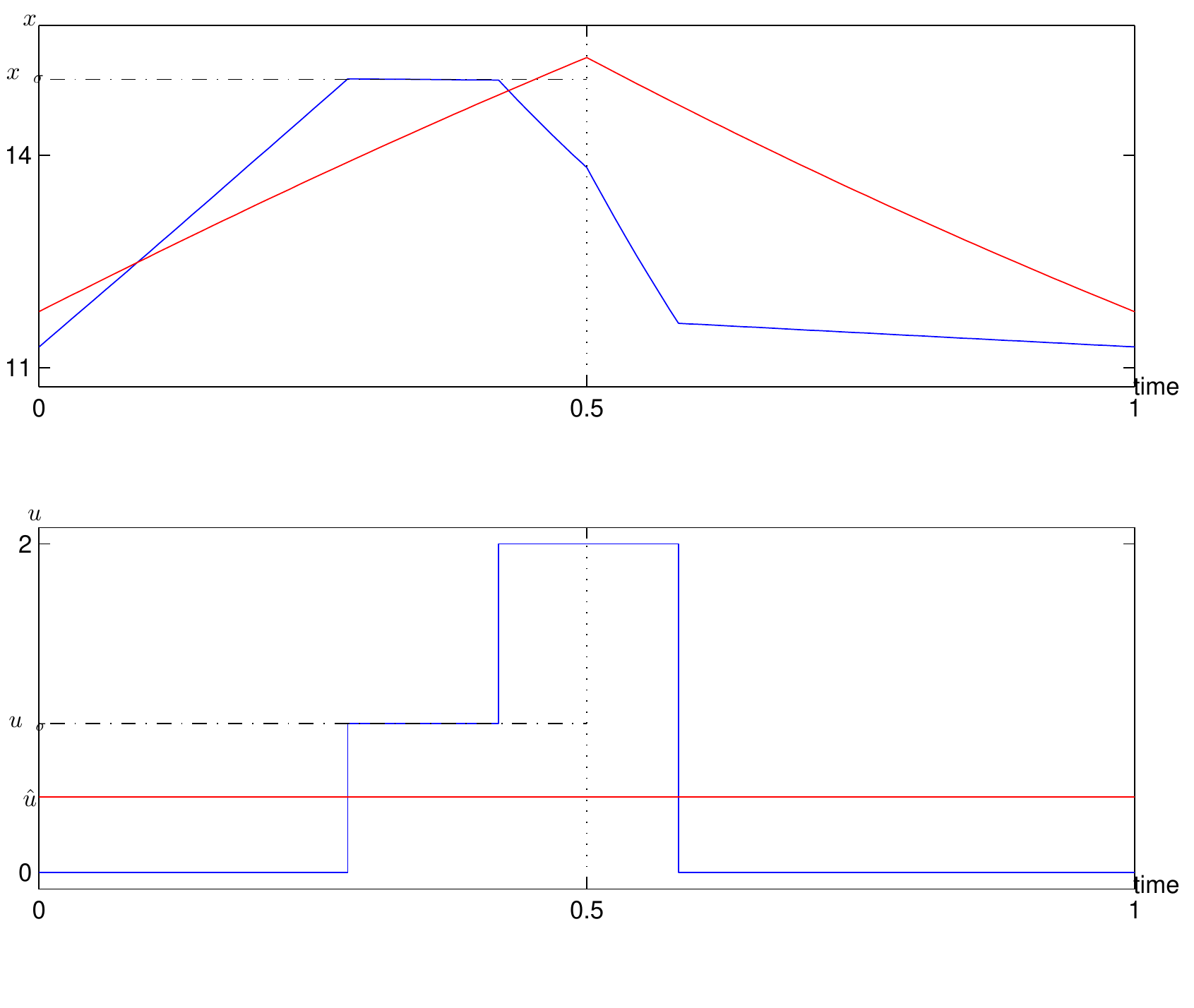}}
\caption{Bang-singular-bang optimal solution (in blue) confronted with the
  most productive constant dilution rate scenario (in red) for the microalgae {\it Isochrysis
  galbana} and the parameters of Table \ref{Galbana}. At the top is the
  evolution of the biomass and at the bottom that of the control. The black
  dash-dotted lines represent the values of $x_\sigma$ and $u_\sigma$
  respectively}\label{fig_case1} 
\end{figure}

A second case needs to be studied, that is the one that corresponds to the
case where $\bar u$ is smaller than $u_\sigma$, so that the singular phase is
not possible. This yields Solution 1, the bang-bang case, when we take $\bar
u=0.8$. However, we expect that the productivity would not be degraded much
since we should be able to do better than the aforementioned case with $\hat
u=0.461$, which is an admissible solution for the present control problem, and
which gives a productivity that is  barely below the optimal one with $\bar
u=2$. It is indeed the case since, with switching times at $t=0.222$ and
$t=0.790$ a productivity of $6.30 g[C]/m^2$ is achieved with a total flow of
$0.457$ still similar to the one observed in both previous cases. 
% \begin{figure}[t]
% %\centerline{\includegraphics[width=0.7\columnwidth]{fig_case2}
% \caption{Bang-bang optimal solution (in blue) confronted with the most
% productive constant dilution rate scenario (in red) for the Isochrysis
% Galbana species and the parameters of Table \ref{Galbana} but with $\bar
% u=0.7$. At the top is the evolution of the biomass and at the bottom the
% control. The value of $x_\sigma$ is large  larger}\label{fig_case2} 
% \end{figure}

If we now reduce $\bar u$ to $0.1$, a significant performance degradation
should be expected since there is an upper-bound on the total flow. In fact,
we obtained numerically that the optimal control here yields a constant
control at $u=0.1$ and a productivity of $4.35 g[C]/m^2$ which is $68.7 \%$ of
the one obtained when $\bar u=2$.    

\subsection{Importance of the respiration}\label{sub:high}

In this section we explore the impact of a large value for parameter $r$, which can be due to increased respiration, or to a high mortality as often is the case in high density cultures (however, we will stick to the respiration terminology). 
If we now consider a species that has all the characteristics of {\it Isochrysis
galbana} except that it has a very large $r$ equal to $0.7day^{-1}$, we
expect the optimal strategy to have a much bigger impact on the
outcome. Indeed, this respiration has much more importance at night than in
the previous case since it consumes $29.5\%$ of the biomass, hence the
importance of limiting the biomass level at night. We see in
Figure~\ref{fig_case3} that the optimal solution is here bang-bang with a
short opening window at the end of the day and at the beginning of the night
to harvest the produced biomass ($u=\bar u$ for $t\,\in[0.479,0.527]$). The
optimal production is here $0.607 g[C]/m^2$ for a total flow of $0.096$, that
is a very little daily medium renewal while the best constant control $\hat
u=0.095$ yields $0.519 g[C]/m^2$. We see here that the daily total flows are
again almost equivalent but that the productivity is here improved by $17\%$
through the bang-bang approach, which is far from being  
negligible, especially since it is made with almost an exact same hydraulic
effort as the constant dilution strategy. Though the $x(t)$ solutions in
Figure~\ref{fig_case3} both look similar, the larger population at night with
the constant control strategy explains why there is more respiration when the
control is constant than when it is bang-bang, hence less productivity.  
\begin{figure}[t]
\centerline{\includegraphics[width=0.7\columnwidth]{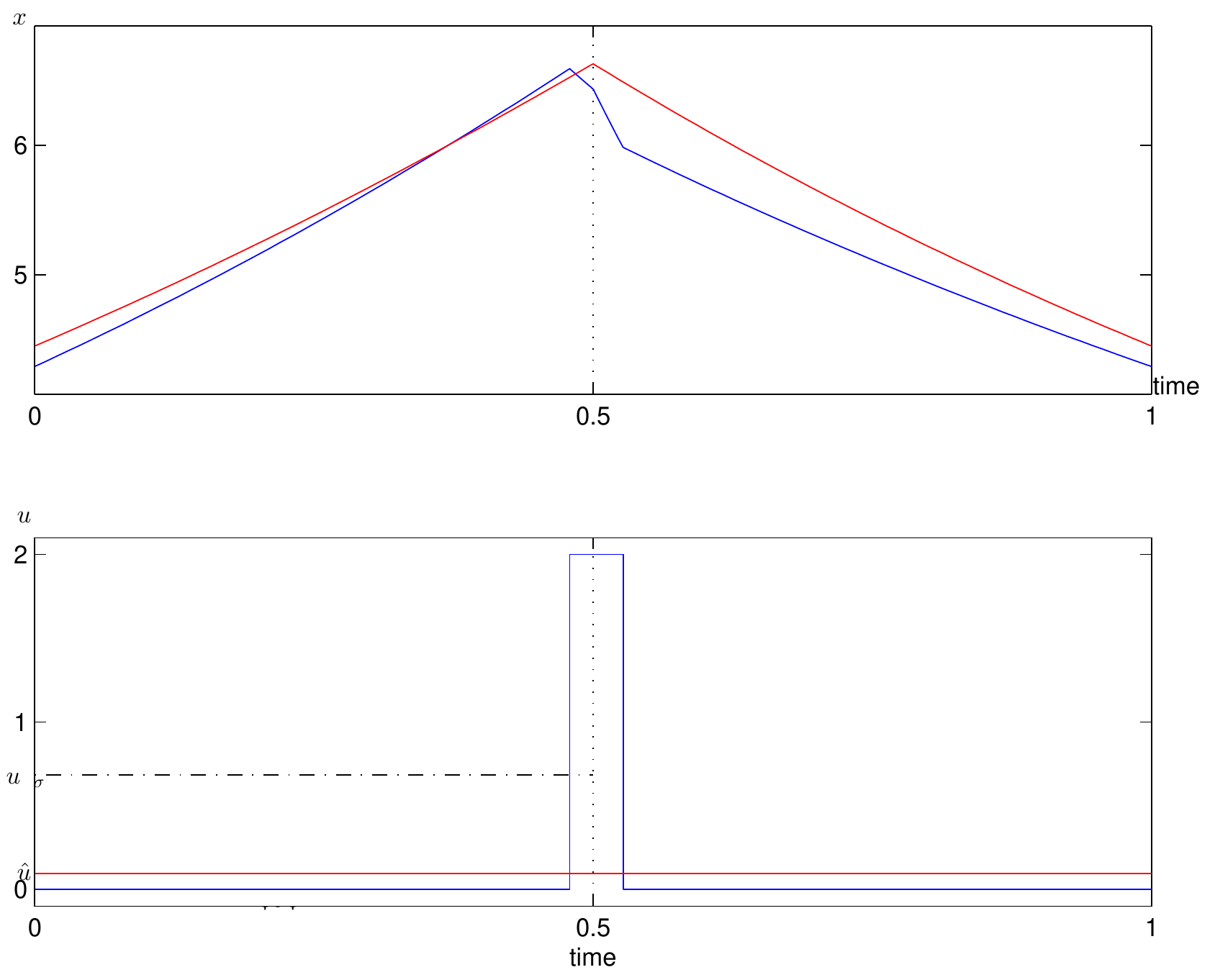}}
\caption{Bang-bang optimal solution (in blue) confronted with the most productive constant dilution rate scenario (in red) for a high-respiration species ($r=0.7$) and $\bar u=2$}\label{fig_case3}
\end{figure}

\subsection{Bifurcation analysis}\label{sub:bif}

 \begin{figure}[t]
 \centerline{\includegraphics[width=0.8\columnwidth]{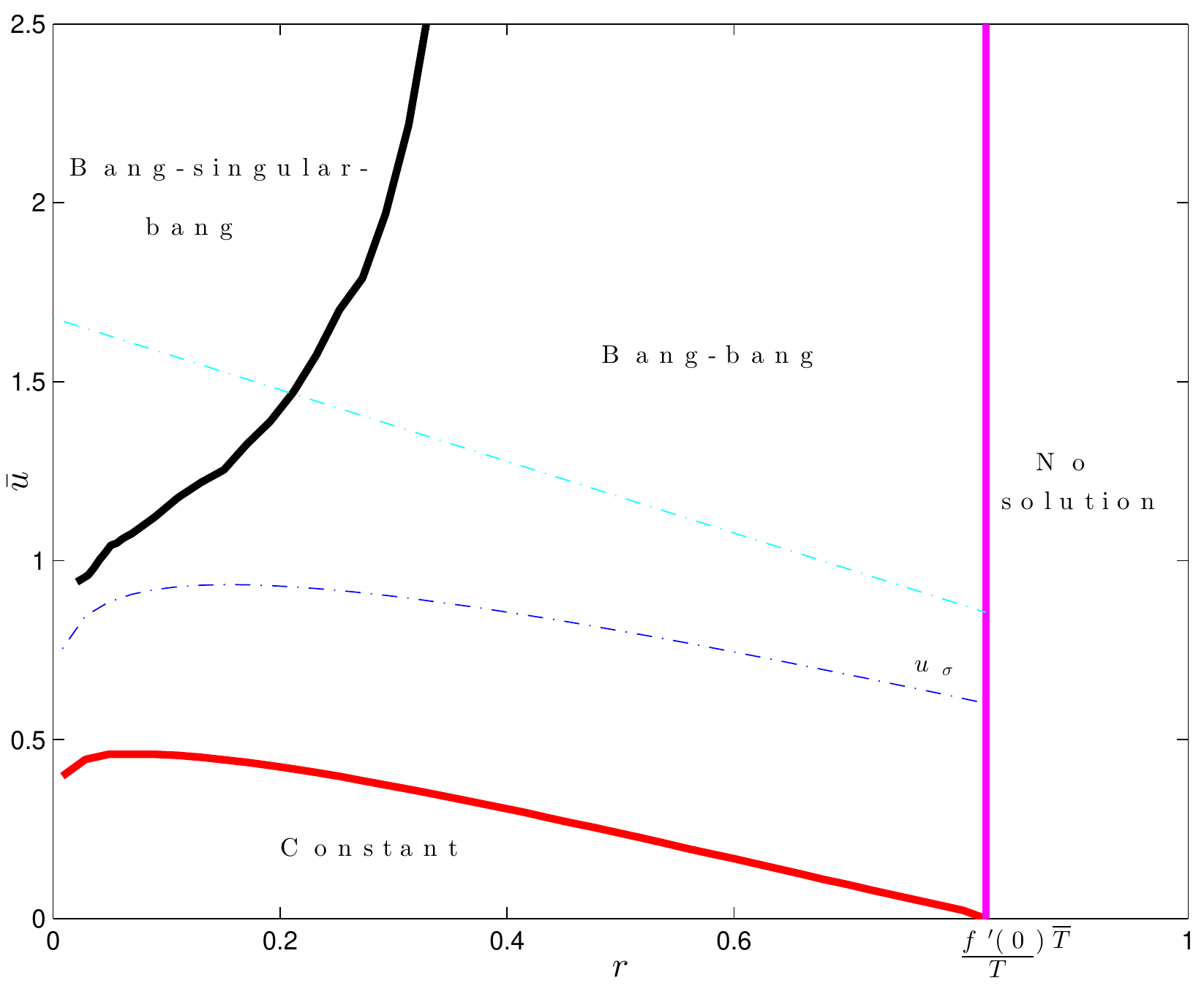}}
 \caption{Bifurcation diagram defining the four $(r,\bar u)$-parametric
   regions where the different patterns of optimal solutions are encountered:
   bang-singular-bang control (above the black line), bang-bang control
   (between the black, red, and magenta lines, constant control (below the red
   line) and no optimal solution (on the right of the magenta line)
 }\label{bifurcation_rubar} 
\end{figure}

In this section, we study more quantitatively the outcome of sections \ref{sub:low}
and \ref{sub:high}. We draw 2D-bifurcation figures for
the parameters $r$ and $\bar u$  (the other parameters being fixed at the
values of Table~\ref{Galbana}). We build a bifurcation diagram for these two
parameters by identifying in which regions the different optimal solution
patterns appear; these regions are delimited by solid lines. 

We first see in Figure~\ref{bifurcation_rubar} that, when $r>\frac{f'(0)\bar
  T}{T}$, no solution is possible because condition (\ref{mumin}) is not
satisfied. Only the wash-out of the photobioreactor can occur, whatever the
control strategy. In the region below the red curve, the optimal solution is a
constant control $u(t)=\bar u$ during the whole day and night;  this was
expected for these small values of $\bar u$ since the photobioreactor produces
a quantity of biomass during the day that cannot be taken out if the actuator
is not always open. The region where the solution is bang-bang is split into
two by the $\bar u=u_\sigma(r)$ curve (dashed-dotted blue line of Figure
\ref{bifurcation_rubar}): when $\bar u<u_\sigma(r)$, no singular phase is
possible and when $\bar u>u_\sigma(r)$ in that region, the singular phase is
theoretically possible but does not occur because the biomass does not reach
the $x_\sigma$ level. Finally, we see that the bang-singular-bang phase is
concentrated in a region where $r$ is  
small and $\bar u$ large: the former allows for a reduced natural decrease of
the biomass concentration, so that it does not need a lot of effort to be
brought back up to $x_\sigma$ at the beginning of the light phase; the latter
allows for a very short phase of maximal harvesting so that it leaves plenty
of time for a singular phase. Finally, we illustrated condition
(\ref{bigubar}) by a dashed-dotted cyan line; this figure confirms what we had
evidenced earlier: when $\bar u$ is above that line the optimal control does
not suffer from control limitations that intrinsically prevent a singular
phase (because $\bar u<u_\sigma$) or forces the optimal control to be
constant.

\subsection{Importance of the open-reactor phase}

In the high-respiration case, we have identified a short window of opening of
the bioreactor around the end of the day and the beginning of the night. In
this subsection, we will show a simulation that evidences the fact that the
main characteristics of this window is its length and not so much the exact
time at which it takes place. We have considered the case where $r=0.7$ and
computed the value of the productivity for switching times $t_1$ from $u=0$ to
$\bar u$ between $0.44$ and $0.5$ and $t_2$ from $\bar u$ to $0$ between $0.5$
and $0.56$. The productivity is then illustrated by different color levels in
Figure~\ref{contour_prod}, the dark blue corresponding to zero (wash out of
the reactor or almost closed reactor) and the purple to values above $0.6$. A
definite pattern appears on this figure: the productivity level is roughly
constant along lines of the form $t_2=\delta+t_1$. The productivity level then
mainly depends on $t_2-t_1$, that is the opening duration of the reactor or,
equivalently, the  
total flow that goes through the reactor.
\begin{figure}[t]
\centerline{\includegraphics[width=0.7\columnwidth]{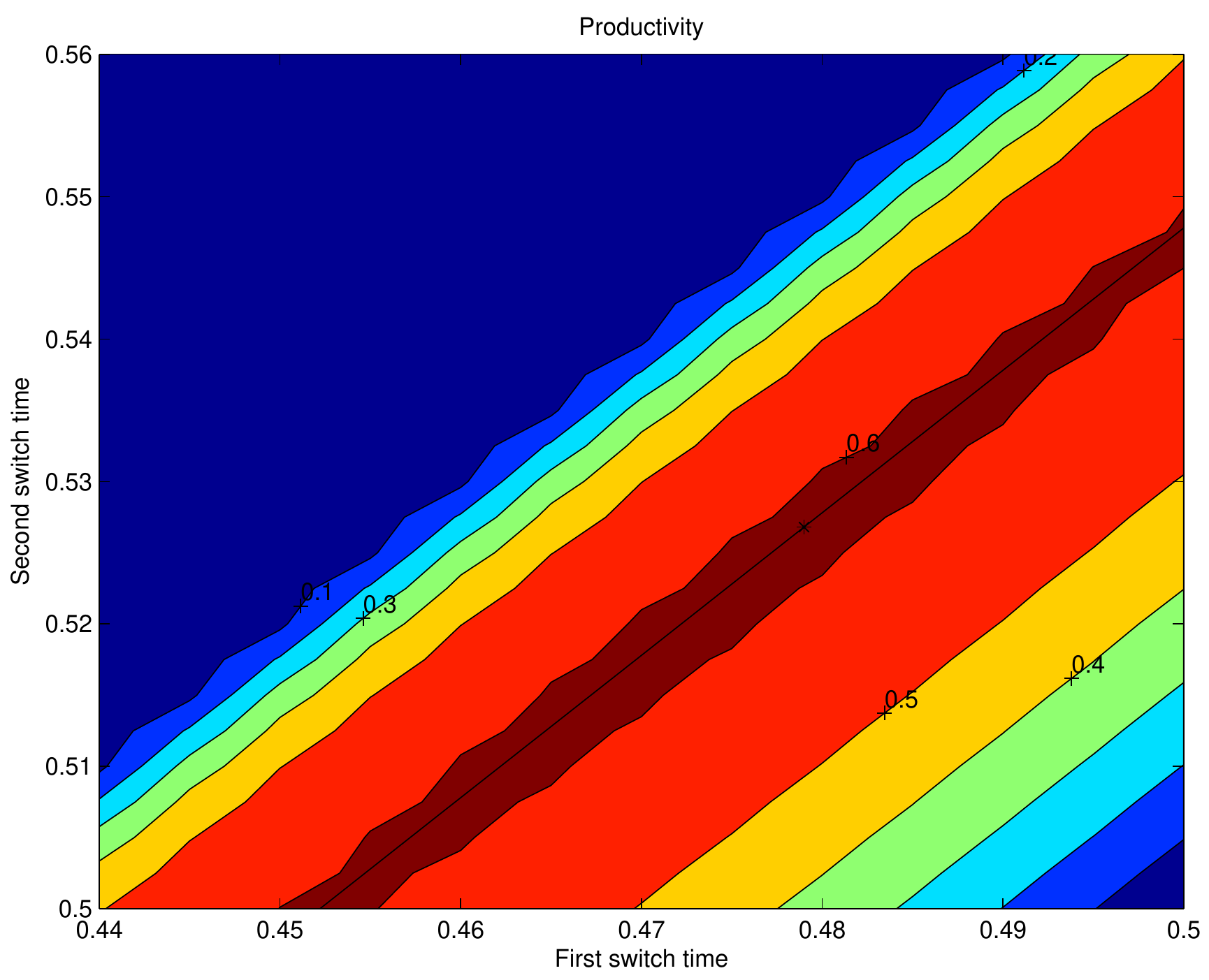}}
\caption{Productivity level contours for bang-bang solutions with the first switching time in abscissa and the second switching time in ordinate. The optimal productivity level computed in the previous section is obtained at the black star.}\label{contour_prod}
\end{figure}

\subsection{Near-optimal strategies}\label{sec:near}

We have seen that the daily flow that goes through the chemostat has great
importance for the productivity level of a solution. In order to confirm
that, we propose a strategy that is of the bang-bang type and consists in
having the dilution equal to $\bar u$ in the interval
$[\overline{T}-\displaystyle\frac{\tilde u}{2\bar
  u},\overline{T}+\displaystyle\frac{\tilde u}{2\bar u} ]$ and $0$ outside of
this interval. That way, the total flow that goes through the reactor is equal
to $\tilde u$, so that we will be able to compare the obtained productivity
level between that strategy and constant control strategies that have the same
daily total flow. We did the computations for $\bar u=2$ and values of $\tilde
u$ that did not lead to the wash-out of the reactor for both species of
subsections \ref{sub:low} (on the left of Figure \ref{near}) and
\ref{sub:high} (on the right of Figure \ref{near}). We see that, in both
cases, the constant control (in red) yields a less productive process that the
corresponding  bang-bang strategy. This is especially true in the
high-respiration case, but is 
also valid in the low respiration case, so that, when the exact optimal
control law is not computed, it is advisable to choose a bang-bang one rather
than a constant control. This strategy is stronly advisable since the optimal
productivity level is represented through a dotted level in both figures and
the near-optimal strategy achieves it almost in both cases for an appropriate
daily flow. This was expected in the high respiration case where the optimal
solution is bang-bang, because we had evidenced in Figure \ref{contour_prod}
that the actual timing of the beginning of the max-control window had little
influence on the productivity level, but is also the case in the
low-respiration case where the optimal solution is bang-singular-bang; even in
this case, our proposed near-optimal strategy can almost achieve the optimal
control level.  This last property might however not hold for a high
respiration species whose optimal pattern is bang-singular-bang: in that case,
the corresponding best bang-bang near-optimal strategy might lead to large
values of $x(t)$ hence more respiration, at the time where the control should
be singular.

\begin{figure}[t]
\centerline{\includegraphics[width=0.45\columnwidth]{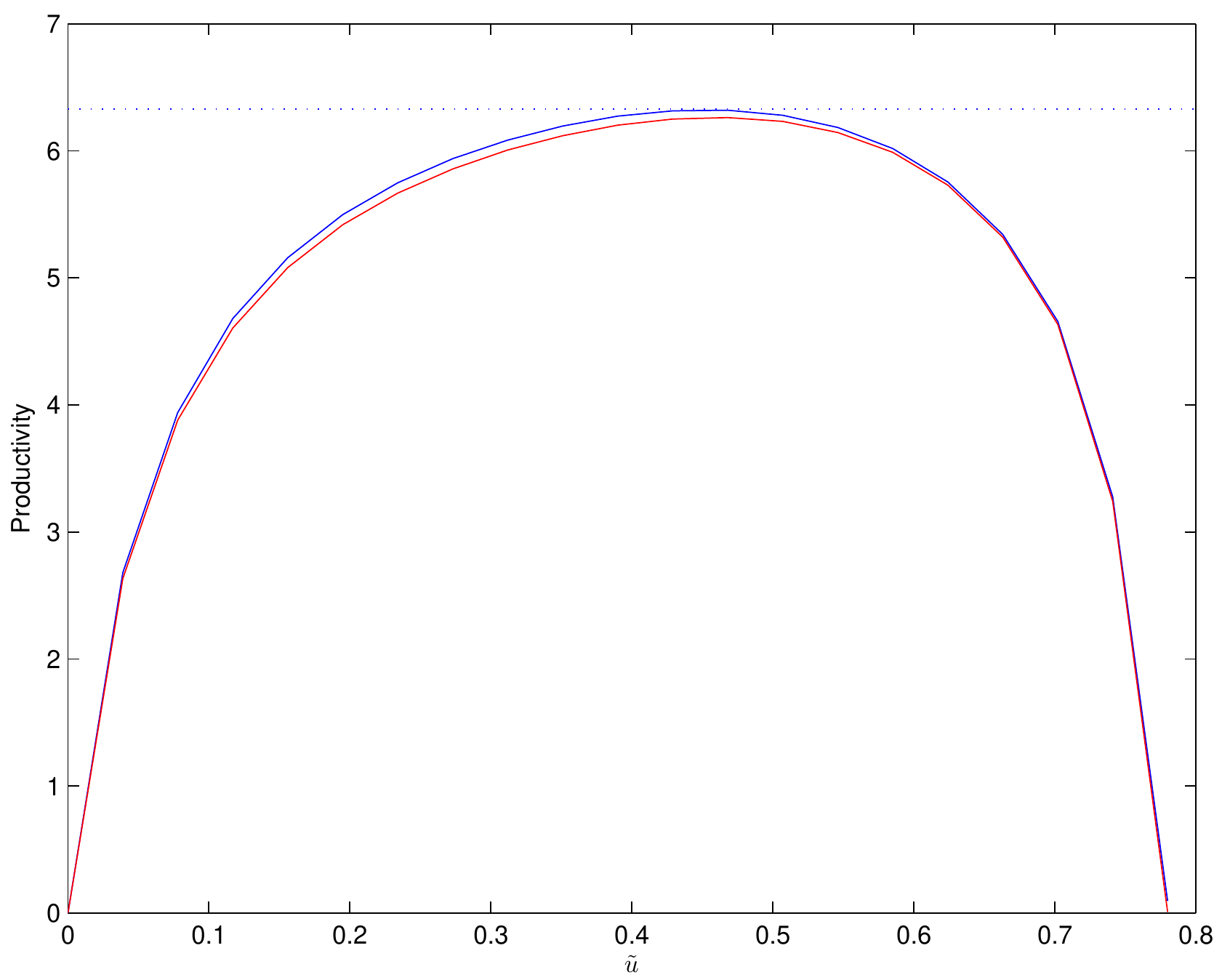}\includegraphics[width=0.45\columnwidth]{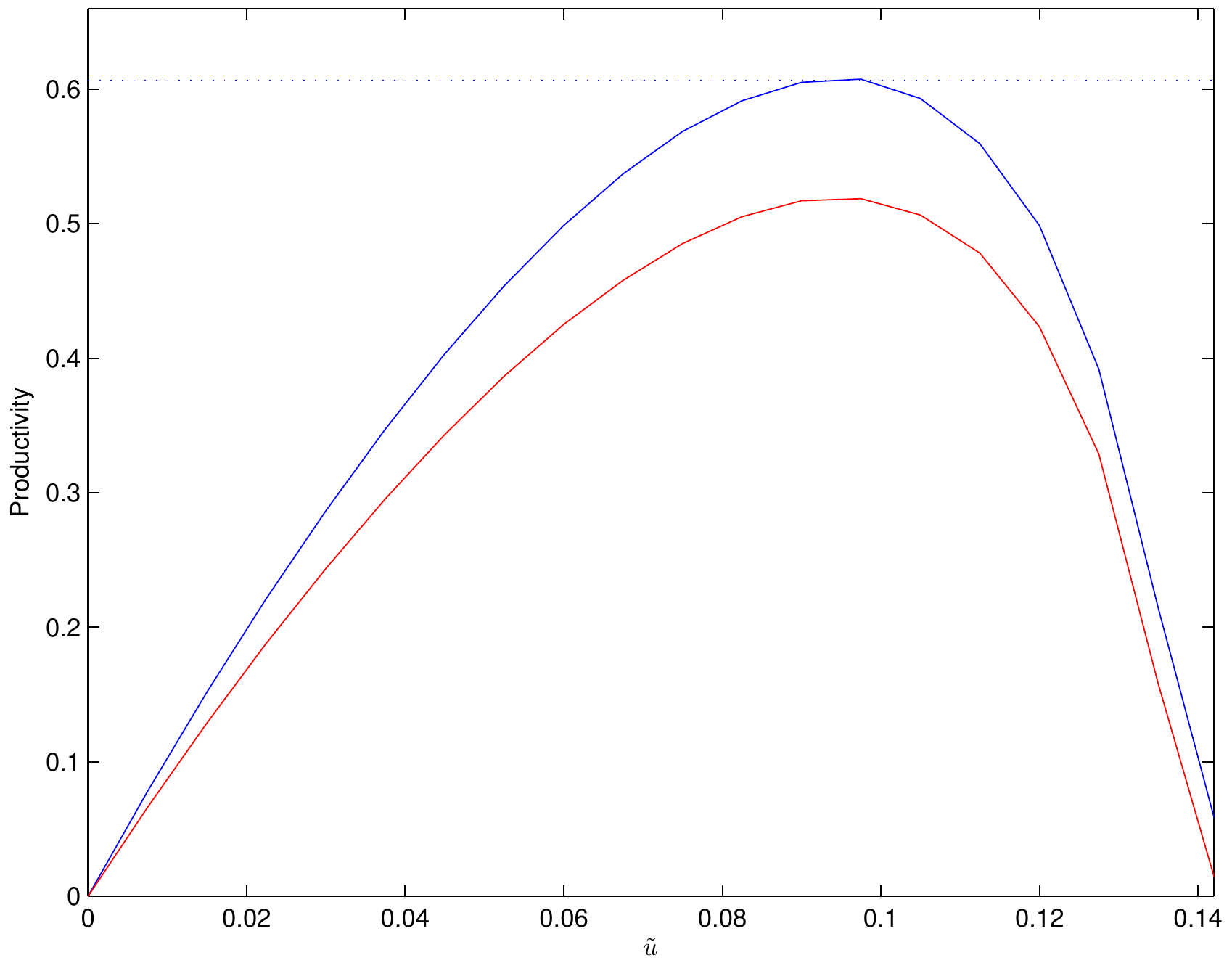}}
\caption{Productivity levels of species of subsections \ref{sub:low} (on the
  left) and \ref{sub:high} (on the right ): in red, the productivity attained
  with the constant control $u=\tilde u$ and in blue the one with the
  near-optimal strategy of Section \ref{sec:near}. The dashed-lines represent
  the optimal productivity levels with $\bar u=2$}\label{near} 
\end{figure}

%\begin{figure}[t]
%\centerline{\includegraphics[width=.85\columnwidth]{figure1.jpg}}
%\caption{Bifurcation picture for $D_{max}=12$, $\kappa=1$, $T=1$, $\overline{T}=T/2$. The solid black line is $\bar\nu={\kappa\rho T}/{\overline{T}}$ (see (\ref{mumin})), the dashed line is $\bar\nu = \kappa(\rho+D_{max})^2/\rho$ and it is related to (\ref{mumax}). Optimal patterns for $A$, $B$ and $C$ are shown on Fig.~3}%\label{fig3}
%\bigskip
%\centerline{\includegraphics[width=.8\columnwidth]{figure3.jpg}}
%\caption{The region $E$ with optimal constant control $u=\bar{u}$. Below this region, this diagram is connected with Fig.~1}\label{fig3}
%\end{figure}

% \begin{figure}[t]
% \centerline{\includegraphics[width=1.0\columnwidth]{figure2}}
% \caption{Different optimal patterns: (A) $\bar\nu=\bar{u}4$, (B) $\bar\nu=36$, (C) $\bar\nu=64$; $\rho=5$, $D_{max}=12$, $\kappa=1$, $T=1$, $\overline{T}=T/2$. Red: $u=0$, Blue:  $u=\bar{u}$, Green: intermediate control $u\in[0,1]$. Magenta line indicates the level $x=\kappa y_\sigma$ (see (\ref{usigma}))}\label{fig1}
% \end{figure}

\subsection{Beyond the photobioreactor}

Photobioreactors are not the only ecological systems that undergo natural  periodic forcing. We will now derive an example that relies more on seasonality and that is built on the famous fishing-stock model  and the Maximum-Sustainable-Yield (MSY, \cite{Clark}).  The question here is therefore to determine how fishermen can best exploit a fishing stock while allowing it to survive. When basing the study on the logistic growth model
\[
\dot x=\alpha x\left(1-\frac{x}{K}\right)-qEx
\]
where $x$ is the size of the stock, $E$ is the fishing effort and $q$ is the fish  catchability, it is determined that the maximization of the number of caught fishes over the long-run is obtained by keeping the stock at $x=\frac{K}{2}$ with $E=\frac{1}{2q}$. These are our $x_\sigma$ and $u_\sigma$ values. If we now consider that the fishing stock has a limited growing season (of length $\bar T$) during which it satisfies the logistic growth) and a season (of length $T-\bar T$) where only natural mortality and predation take place, we can set ourselves in the setting of the present paper. Indeed we can then define $f(x)-rx=\alpha x\left(1-\frac{x}{K}\right)$ with $r$ the mortality rate during the non-growing season. We then define $u=qE$.  Taking $\alpha=6$, $K=10$, $r=1$, $\bar T=0.2$, $T=1$ and $\bar u=2$, we first noticed on Figure \ref{fig_fishing} (black curve) that, in the absence of fishing, the fishing stock settles at a level that is way below $K$ and even below $\frac{K}{2}$, the value of the MSY, which shows that no singular phase is possible in the optimal solution. The bang-bang optimal solution is then computed (blue curve) with switching times at $t=0.188$ and $0.288$ and compared with the best constant harvesting solution (red curve). In both cases, the fishing stocks are very similar during the growing season, but the optimal harvesting method reduces it at the beginning season so that mortality does not have time to do a lot of damage and, in the end, it improves the total of caught fishes during the season by 37\%.
\begin{figure}[t]
\centerline{\includegraphics[width=0.7\columnwidth]{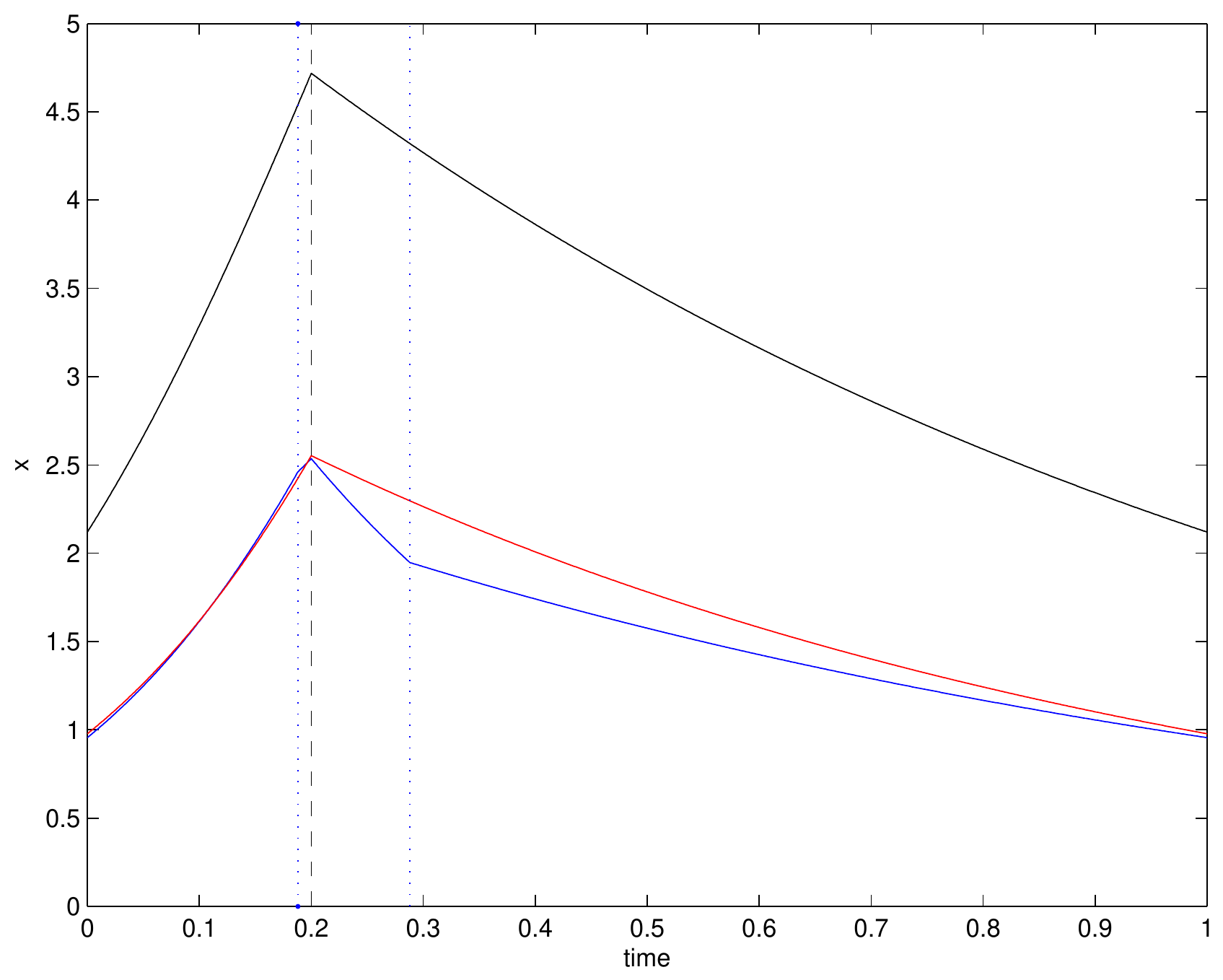}}
\caption{Comparison of the fishing stock when not fished (black curve), constantly fished (red curve) and using the bang-bang optimal control (blue curve).}\label{fig_fishing}
\end{figure}

\section{Conclusions}

In this paper, we have shown how the day-night constraint influences the
optimal control strategy that achieves maximal biomass daily productivity in a
photobioreactor and reduces the optimal productivity level. We have identified
three families of strategies that can achieve optimality: bang-singular-bang,
bang-bang, and constant maximal control; the first two are characterized by an
harvesting of the biomass at the end of the light phase and beginning of the
night to limit the negative effects of the respiration, while the latter leads
to permanent harvesting because the maximal dilution rate is too weak compared
with the growth rate of the biomass. These families of control strategies have
been built for a large set of nonlinear one-dimensional models with light-dark
phases so that they can be applied beyond their motivation in this paper: that
is a photobioreactor with Monod-like growth and Beer-Lambert light
attenuation. Through simulations, we have shown that the necessity of applying
an optimal control  
strategy strongly depends on the respiration rate: if the latter is weak,
constant control can achieve almost the same performance 
 since the night phase consumes little
biomass. However, we have also shown that a better choice is probably to apply
a bang-bang control law which, in the presented simulations, always yields
better productivity than comparable strategies with constant dilution. This is
particularly supported by the fact the a bang-bang law with a proper timing
can almost achieve the same productivity level as the optimal control law
developed in the present paper; also this better productivity is achieved with
a very similar cumulated effort as the one necessary for a constant dilution
rate.

% %This work is a preliminary study that will later lead to biomass productivity optimization in a variable volume setting and using the Droop model.

% \noindent {\bf Acknowledgements:}    
% This paper presents research results supported by the  ANR-06-BIOE-014 Shamash project.

\bibliographystyle{ieeetr}
\bibliography{biblio_photoB}

\appendix

\section{Proof of Theorem \ref{thm1}}

We will detail the different forms of solutions and show that no other solution than the one exposed in Theorem \ref{thm1} can occur. \\
{\bf Bang-bang with $\lambda_0>1$: } With $\lambda_0>1$, we have $u=0$ at
times $0$ and $T$. At least, a switch of $u$ from $0$ to $\bar{u}$ and back to
$0$ then needs to occur between time $0$ and $T$ (otherwise the payoff would
be $0$, since the argument of the integral that defines the productivity would
always be $0$). As we have seen 
in Proposition~\ref{prop1}-(i), the switch from $0$ to $\bar{u}$ cannot take place in the dark phase. For the solution that we
study, a switch then needs to take place at time $t_{0\bar{u}}$ in the $(0,\overline{T})$ interval and for $x(t_{0\bar{u}})=x_{0\bar{u}}<x_\sigma$ and $\lambda(t_{0\bar{u}})=1$ (Proposition~\ref{prop1}-(ii)).

The switch from $u=\bar{u}$ to $u=0$ then needs to occur at some time $t_{\bar{u}0}>t_{0\bar{u}}$ (and some value of $x(t_{\bar{u}0})=x_{\bar{u}0}$). Two possibilities then need to be considered:
\begin{itemize}
\item $t_{\bar{u}0}\leq\overline{T}$: from Proposition \ref{prop1}, we can
  deduce that $x_{\bar{u}0}\geq x_\sigma$. Applying $u=0$ then forces
  convergence toward $\bar x^0$ for $t_{\bar{u}0}\leq t\leq \overline{T}$, so
  that $x(t)$ increases, and so stays above $x_\sigma$ (so that
  $x(\overline{T})\triangleq x_{\overline{T}}\geq x_\sigma>x_0$). No switch
  back to $u=\bar{u}$ can then take place neither before $\overline{T}$
  (because of (ii) of Proposition \ref{prop1}) nor after $\overline{T}$
  (because of (i) of Proposition \ref{prop1}).  
\item $t_{\bar{u}0}>\overline{T}$: from Proposition \ref{prop1}-(i), we can
  deduce that no other switch takes place afterward. Two cases will then be
  considered: either $x_{0\bar{u}}\geq \bar x^{\bar{u}}$ so that $x(t)$
  decreases and $x_{0\bar{u}}\geq x_{\overline{T}}>x_{\bar{u}0}$ or
  $x_{0\bar{u}}< \bar x^{\bar{u}}$ so that $x_{0\bar{u}}< x_{\overline{T}}$,
  and we do not necessarily know if $x_{0\bar{u}}>x_{\bar{u}0}$ 
\end{itemize}
We have then shown that any bang-bang solution with $u(0)=0$  presents a
single switch to $u=\bar{u}$ (before $\overline{T}$) and a single switch to
$u=0$ (before or after $\overline{T}$). We are then left with two things to
show: first that $t_{\bar{u}0}\leq \overline{T}$ leads to a contradiction and
then that, when $x_{0\bar{u}}< \bar x^{\bar{u}}$, we indeed have
$x_{0\bar{u}}>x_{\bar{u}0}$. 

We will now show that the switch from $\bar{u}$ to $0$ needs to occur strictly
after $\overline{T}$. For that, we will consider that $t_{\bar{u}0}\leq
\overline{T}$ and show that this leads to a contradiction. Using the constancy
of the Hamiltonian in $[0,\overline{T})$, we have, by continuity of the
variables at time $\overline{T}$, that 
\[
\lambda_0\left(f(x_0)-rx_0\right)=\lambda_{\overline{T}}\left(f(x_{\overline{T}})-r x_{\overline{T}} \right)\;,
\]
and, in $(\overline{T},T]$, with $x(T)=x_0$ and $\lambda(T)=\lambda_0$:
\[
\lambda_0\left(-rx_0\right)=\lambda_{\overline{T}}\left(-r x_{\overline{T}} \right)\;.
\]
Taking the differences between the equalities, we obtain
\[
\lambda_0f(x_0)=\lambda_{\overline{T}}f(x_{\overline{T}})\;.
\]
Taking the quotient of the last two equalities then yields
\[
\frac{f(x_0)}{x_0}=\frac{f(x_{\overline{T}})}{x_{\overline{T}}}\;,
\]
with $x_{\overline{T}}\geq x_\sigma>x_0$. Remembering from Remark~1 that
$\frac{f(x)}{x}$ is a decreasing function, this leads to a contradiction
because $x_{\overline{T}}>x_0$ since the biomass only decreases at night. The
switch from $u=\bar{u}$ to $u=0$ then needs to take place with
$t_{\bar{u}0}>\overline{T}$. 

We will now show that, even in the case where $x_{0\bar{u}}<\bar x^{\bar{u}}$,
the second switch takes place with $x_{\bar{u}0}<x_{0\bar{u}}$. For that, we
first note that, since no switch from $u=\bar{u}$ to $u=0$ takes place in the
$[t_{0\bar{u}},\overline{T}]$ interval,
$f(x_{\overline{T}})-rx_{\overline{T}}>f(x_{0\bar{u}})-rx_{0\bar{u}}$. Indeed,
since $x(t)$ is increasing along the considered solutions from
$x_{0\bar{u}}<x_\sigma$, $f(x(t))-rx(t)$ first increases from
$f(x_{0\bar{u}})-rx_{0\bar{u}} $and then decreases (once $x(t)$ gets above
$x_\sigma$); if we had $f(x_{\overline{T}})-rx_{\overline{T}}\leq
f(x_{0\bar{u}})-rx_{0\bar{u}}$, there would be a time $\tilde t$ belonging to
the interval $(t_{0\bar{u}},\overline{T}]$ where $f(x(\tilde t))-rx(\tilde
t)=f(x_{0\bar{u}})-rx_{0\bar{u}}$. A switch then necessarily would have taken
place at that instant because of constancy of the Hamiltonian 
\[
\left(f(x_{0\bar{u}})-rx_{0\bar{u}}\right)=\lambda(\tilde t)\left(f(x(\tilde t))-(r+\bar{u}) x(\tilde t) \right)+\bar{u}x(\tilde t)
\]
results, using $f(x(\tilde t))-rx(\tilde t)=f(x_{0\bar{u}})-rx_{0\bar{u}}$, in
\[
(1-\lambda(\tilde t))\left(f(x(\tilde t))-rx(\tilde t)\right)=(1-\lambda(\tilde t))\bar{u}x(\tilde t)\;.
\]
This can only be achieved if $\lambda(\tilde t)=1$, which corresponds to a
switching instant (and a contradiction) or $\left(f(x(\tilde t))-rx(\tilde t)
\right)=x(\tilde t)$, which would mean that $x(\tilde t)=\bar x^{\bar{u}}$,
which also is impossible since, in that phase, $x(t)$ was converging in
infinite time toward $\bar x^{\bar{u}}$ (and $x_{0\bar{u}}<\bar
x^{\bar{u}}$). We then conclude that, as announced,
$f(x_{\overline{T}})-rx_{\overline{T}}>f(x_{0\bar{u}})-rx_{0\bar{u}}$.  

We then extensively use the constancy of the Hamiltonian in both the bright and the dark phase  to show that $x_{\bar{u}0}<x_{0\bar{u}}$. In the bright phase, we have 
\[
\lambda_0\left(f(x_0)-rx_0\right)=\left(f(x_{0\bar{u}})-rx_{0\bar{u}}\right)\;,%=\lambda_{\overline{T}}\left(f(x_{\overline{T}})-(r+\bar{u}) x_{\overline{T}} \right)+x_{\overline{T}}
\]
and, in the dark phase
\[
\lambda_0\left(-rx_0\right)=\left(-rx_{\bar{u}0}\right)\;.%=\lambda_{\overline{T}}\left(-(r+\bar{u}) x_{\overline{T}} \right)+x_{\overline{T}}
\]
Taking the difference between these equalities yields
\[
\lambda_0f(x_0)=\left(f(x_{0\bar{u}})-rx_{0\bar{u}}\right)+rx_{\bar{u}0}\;.
\]
Finally we note that, after $t_{\bar{u}0}$, the dynamics become
\[
\dot \lambda =r\lambda\;,
\]
so that, using $\lambda(t_{\bar{u}0})=1$ and $\lambda(T)=\lambda_0$, we have
\[
\lambda_0=\displaystyle \mathrm e^{r(T-t_{\bar{u}0})}\;,
\]
which yields
\[
f(x_{0})e^{r(T-t_{\bar{u}0})}-rx_{\bar{u}0}=f(x_{0\bar{u}})-rx_{0\bar{u}}\;.
\]
Concavity implies that $f(x_{0})e^{r(T-t_{\bar{u}0})}>f(x_{0}e^{r(T-t_{\bar{u}0})})=f(x_{\bar{u}0})$, so that the constancy of the Hamiltonian conditions can only be satisfied if 
\[
f(x_{\bar{u}0})-rx_{\bar{u}0}<f(x_{0\bar{u}})-rx_{0\bar{u}}\;.
\]
This can only be achieved for $x_{\bar{u}0}<x_{0\bar{u}}$, since $x_{\bar{u}0}<x_{\overline{T}}$ and $f(x_{\overline{T}})-rx_{\overline{T}}>f(x_{0\bar{u}})-rx_{0\bar{u}}>f(x_{\bar{u}0})-rx_{\bar{u}0}$.

{\bf Bang-singular-bang with $\lambda_0>1$: }

We will first look at what a singular arc could be. For that, we see that $\frac{\partial H}{\partial u}=(1-\lambda)x$ should be 0 over a time interval which, since a biomass level of $x=0$ does not make sense when optimizing the productivity, amounts to imposing $\lambda=1$. We then compute its time derivatives.
\[
\frac{d}{dt}\left(\lambda-1\right)_{\arrowvert \lambda=1}=-f'(x)h(t)+r\;.
\]
When $h(t)=0$, that is in the dark phase, no singular arc is thus
possible. When $h(t)=1$, this derivative is equal to zero when $x=x_\sigma$
defined in (\ref{xsigma}). The singular control is then the control that
maintains this equilibrium, that is $u_\sigma$ defined in (\ref{usigma}). This
control is positive thanks to (\ref{mumin}) 
but it is smaller than $\bar{u}$ only if 
\begin{equation}\label{mumax}
f\left[(f')^{-1}(r)\right]<(r+\bar{u})(f')^{-1}(r)\;.
\end{equation}
No singular control can exist otherwise. In fact, this implies that $\bar x^{\bar{u}}<x_\sigma<\bar x^0$. The case where $u_\sigma=\bar{u}$ has been handled through the bang-bang case.

When a singular branch appears in the optimal solution, it is locally optimal because the second order Kelley condition \cite{Kelley}:
$$
\frac{\partial}{\partial u}\left(\frac{\mathrm d^2}{\mathrm d\tau^2}\frac{\partial H}{\partial u}\right)=-f''(x_\sigma)\geq 0
$$
is satisfied on the singular arc because of the concavity. 

The construction of the solution is very similar to that in the purely
bang-bang case. Similarly, a first switch needs to  occur in the interval
$(0,\overline{T})$. This switch can be from $u=0$ to $u=\bar{u}$ or from $u=0$
to $u=u_\sigma$ and should occur with $x\leq x_\sigma$
(Proposition~\ref{prop1}-(ii)). In fact, if a switch first occurs to
$u=\bar{u}$, $x$ then converges toward $\bar x^{\bar{u}}$, so that it does not
go on or above $x_\sigma$, which prevents any other switch before
$\overline{T}$ (Proposition \ref{prop1}-(iii)). No singular arc can then
appear. These solutions have been handled earlier.  

In bang-singular-bang cases, a switch from $0$ to $u_\sigma$ then directly takes place once $\lambda=1$ at $(t_{0\sigma},x_\sigma)$. 

From there, $\lambda(t)=1$ and $x(t)=x_{\sigma}$ for some time. This could be
until $t_{\sigma 0}\leq\overline{T}$, followed directly by $u=0$ or the
singular arc could end at time $t_{\sigma \bar{u}}<\overline{T}$, where a
switch occurs toward $u=\bar{u}$; note that the strict inequality is due to
the fact that, if we had $t_{\sigma \bar{u}}=\overline{T}$, we would then have
$\lambda(\overline{T})=1$ and $\dot \lambda>0$ for $t\geq \overline{T}$, so
that $u$ directly goes to $0$: no actual switch to $u=\bar{u}$ has taken
place. Using the constancy of the Hamiltonian, we can conclude that a direct
switch from $u_{\sigma}$ to $u=0$ is not possible at $t_{\sigma
  0}\leq\overline{T}$. Indeed, if such an early switch occurred, we would have

\[
\lambda_0\left(f(x_0)-rx_0\right)=\lambda_{\overline{T}}\left(f(x_{\overline{T}})-r x_{\overline{T}} \right)\;,
\]
and
\[
\lambda_0\left(-rx_0\right)=\lambda_{\overline{T}}\left(-r x_{\overline{T}} \right)\;.
\]
Taking the differences between the equalities, we obtain
\[
\lambda_0f(x_0)=\lambda_{\overline{T}}f(x_{\overline{T}})\;.
\]
Taking the quotient of the last two equalities then yields
\[
\frac{f(x_0)}{x_0}=\frac{f(x_{\overline{T}})}{x_{\overline{T}}}\;,
\]
which we have shown earlier be impossible. The optimal solution then leaves the singular arc with $u=\bar{u}$ strictly before $\overline{T}$, and switches to $0$ strictly after $\overline{T}$.

From then on, things are unchanged with respect to the bang-bang case. 

{\bf Solution with $\lambda_0\leq 1$: }

Since $\dot\lambda>0$ in $\lambda=1$ in the dark phase, such a solution would
mean that harvesting takes place during the whole dark phase because no
transition from $u=0$ to $u=\bar{u}$ can take place in this phase
(Proposition~\ref{prop1}-(i)). Two possibilities then occur: either
$u=\bar{u}$ all the time or switches from $u=\bar{u}$ to $u=0$ or $u_\sigma$
and then back to $u=\bar{u}$ take place in the interval $(0,\overline{T})$. 

In the latter case, the first switch from  $u=\bar{u}$ to $u=0$ can only take
place with $x>x_\sigma$ (Proposition~\ref{prop1}-(iii)). Then, when the
control $u=0$ is applied for some time, the solution $x(t)$ is increasing. No
switch back to $u=\bar{u}$ can then take place before $\overline{T}$ because
such a switch would require $x(t)<x_\sigma$ (Proposition \ref{prop1}-(ii)),
which cannot occur. An optimal solution of this form cannot exist.  

We can also show that no strategy in the $(0,\overline{T})$ interval can have
the form $u=\bar{u}\rightarrow u_\sigma \rightarrow u=0 \text{ or }
\bar{u}$. Indeed, in order to reach the singular arc with $u=\bar{u}$, a
solution should be coming from above it. If the switch that takes place at the
end of the singular phase is from $u_\sigma$ to $0$, $x(t)$ will increase and
there should be a subsequent switch from $0$ to $\bar{u}$ which is impossible
with $x(t)>x_\sigma$. If the switch that takes place 
at the end of the singular phase is from $u_\sigma$ to $\bar{u}$, $x(t)$ will
decrease all the time between $t_{\sigma \bar{u}}$ and $T$, which is in
contradiction with the fact that we had $x(0)>x_\sigma$.

%\begin{figure}[t]
% \centerline{\includegraphics[width=.85\columnwidth]{figure2.eps}}
% \caption{Bifurcation picture for $D_{max}=12$, $\kappa=1$, $T=1$, $\overline{T}=T/2$. The solid black line is $\bar\nu={\kappa\rho T}/{\overline{T}}$ (see (\ref{mumin})), the dashed line is $\bar\nu = \kappa(\rho+D_{max})^2/\rho$ and it is related to (\ref{mumax}). Optimal patterns for $A$, $B$ and $C$ are shown on Fig.~3}%\label{fig3}
% \bigskip
% \centerline{\includegraphics[width=.8\columnwidth]{figure3.jpg}}
% \caption{The region $E$ with optimal constant control $u=\bar{u}$. Below this region, this diagram is connected with Fig.~1}\label{fig3}
% \end{figure}

The only potential optimal control in that family is therefore $u(t)=\bar{u}$
for all times. Using the expressions computed previously, this control can be
a candidate optimal control law only if $x_0=x_{0min}$ as we have seen
earlier. This solution can only potentially exist if (\ref{x0max}) is
satisfied.

\end{document}